\theoremstyle{plain}
\newtheorem{theorem}{Theorem}[section]
\newtheorem{corollary}[theorem]{Corollary}
\newtheorem{claim}[theorem]{Claim}
\newtheorem{lemma}[theorem]{Lemma}
\theoremstyle{definition}
\newtheorem{definition}{Definition}
\title{Additive bases, coset covers, and non-vanishing linear maps}
\author{J\'anos Nagy\thanks{Alfr\'ed R\'enyi Institute of Mathematics and
MTA-BME Lend\"ulet Arithmetic Combinatorics Research Group, ELKH, 
\emph{email}: \textbf{janomo4@gmail.com}}
	\and
	P\'eter P\'al Pach\thanks{MTA-BME Lend\"ulet Arithmetic Combinatorics Research Group, ELKH, Department of Computer Science and Information Theory, Budapest University of Technology and Economics, \emph{email}: \textbf{ppp@cs.bme.hu} }
	\and
	Istv\'an Tomon\thanks{ETH Zurich, \emph{e-mail}: \textbf{istvan.tomon@math.ethz.ch}}}
\date{}
\begin{document}
\sloppy 
\maketitle

\begin{abstract}




Recently, the first two authors proved the Alon-Jaeger-Tarsi conjecture on non-vanishing linear maps, for large primes. We extend their ideas to address several other related conjectures. 

We prove the weak Additive Basis conjecture proposed by Szegedy, making a significant  step towards  the  Additive Basis conjecture of Jaeger, Linial, Payan, and Tarsi. In fact, we prove it in a strong form: there exists a set $A\subset\mathbb{F}_p^*$ of size $O(\log p)$ such that if $B\subset\mathbb{F}_p^{n}$ is the union of $p$ linear bases, then 
$A\cdot B=\{a\cdot v:a\in A, v\in B\}$ is an additive basis.

An old result of Tomkinson states that if $G$ is a group, and $\{H_{i}x_{i}:i\in [k]\}$ is an irredundant coset cover of $G$, then $|G:\bigcap_{i\in [k]} H_{i}|\leq k!,$ and this bound is the best possible. It is a longstanding open problem whether the upper bound can be improved to $e^{O(k)}$ in case we restrict cosets to subgroups. Pyber proposed to study this question for abelian groups. We show that somewhat surprisingly, if $G$ is abelian, the upper bound can be improved to $e^{O(k\log \log k)}$ already in the case of general coset covers, making the first substantial improvement over the $k!$ bound.

Finally, we prove a natural generalization of the Alon-Jaeger-Tarsi conjecture for multiple matrices.
\end{abstract}

\section{Introduction}

It was observed by Szegedy \cite{Sz07} that there is a group of longstanding conjectures in linear algebra and group theory that are closely related. This includes the Alon-Jaeger-Tarsi conjecture \cite{AT89,J82} on non-vanishing linear maps, the Additive basis conjecture of Alon, Linial, and Meshulam \cite{ALM91}, and problems of Neumann \cite{N54} and Pyber \cite{P96} on minimal coset covers of groups. The Alon-Jaeger-Tarsi conjecture states that if $p\geq 5$, then for every invertible matrix $M\in \mathbb{F}_p^{n\times n}$ there exists $x\in\mathbb{F}_p^{n}$ such that neither $x$, nor $Mx$ have zero coordinates. 

The first two authors investigated connections between the polynomial method and different versions of the group ring method in  \cite{NP21}. As a combinatorial shadow of these ideas they resolved the Alon-Jaeger-Tarsi conjecture for large $p$. The final combinatorial proof can be presented quite shortly. On the other hand it would have been difficult to come up with this proof without the other more conceptual ideas. In the current paper, as a continuation of this program we use the combinatorial ideas from \cite{NP21} to make substantial progress on each of the aforementioned problems.


\subsection{Additive Basis conjecture}
If $p$ is a prime, and $n$ is a positive integer, a multiset $B\subset \mathbb{F}_p^{n}$ is called an \emph{additive basis}, if every vector $w\in \mathbb{F}_p^{n}$ can be written as $w=\sum_{v\in B}\alpha_v v$, where $\alpha_{v}\in\{0,1\}$ for every $v\in B$. Clearly, if $B_0$ is a linear basis, and $B$ is the union of $p-1$ copies of $B_0$, then $B$ is an additive basis (here and later, union is taken as a multiset). The Additive Basis conjecture of Jaeger, Linial, Payan, and Tarsi \cite{JLPT} asks whether there exists a constant $c_1(p)$ (possibly $c_1(p)=p$) such that the union of $c_1(p)$ linear bases is always an additive basis. More precisely, this conjecture first appeared in a work of Alon, Linial, and Meshulam \cite{ALM91}, who proved that the union of $\Omega(p\log n)$ bases is an additive basis in $\mathbb{F}_p^{n}$, but in \cite{ALM91}, the conjecture is attributed to~\cite{JLPT}.

Szegedy \cite{Sz07} proposed a weakening of this conjecture, which is referred to as the Weak Additive Basis conjecture: For every prime $p\geq 3$ there exists a constant $c_2(p)$ such that if $B\subset \mathbb{F}_p^{n}$ is the union of $c_2(p)$ bases, then every $w\in\mathbb{F}_{p}^{n}$ can be written as $w=\sum_{v\in B}\alpha_v v$, where $\alpha_v\in \{1,\dots,p-1\}$ for every $v\in B$. We provide two results, both of which resolves this latter conjecture in a strong form for $p\geq 5$. In particular, one can take $c_2(p)=3$ if $p$ is sufficiently large.

\begin{theorem}\label{thm:main}
Let $p$ be a prime, and $n$ be a positive integer. 
\begin{itemize}
    \item[(i)] Let $p\geq 5$. There exists $A\subset \mathbb{F}_p$ of size $2\lfloor \log_2 p \rfloor$ such that the following holds. Let $B\subset \mathbb{F}_p^{n}$ be the union of $p$ bases, then every $w\in\mathbb{F}_p^{n}$ can be written as $w=\sum_{v\in B}\alpha_v v$, where $\alpha_v\in A$ for every $v\in B$.
    
    \item[(ii)] Let $p\geq 11$. Let $B\subset \mathbb{F}_p^{n}$ be the union of $3$ bases, then every $w\in\mathbb{F}_p^{n}$ can be written as a nonzero linear combination of elements of $B$.
\end{itemize}
\end{theorem}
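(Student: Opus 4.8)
The plan is to follow the strategy that the first two authors used for the Alon–Jaeger–Tarsi conjecture, namely to reduce the statement to a combinatorial/polynomial-method question about whether a certain multilinear polynomial is non-vanishing, and then to verify non-vanishing by an explicit coefficient computation or by a Combinatorial Nullstellensatz argument. For part~(i), let $B = B_1 \cup \dots \cup B_p$ be the union of the $p$ bases, and for a fixed target $w$, we want to choose coefficients $\alpha_v \in A$ so that $\sum_{v \in B}\alpha_v v = w$. I would first show it suffices to prove the statement when $n=p$ (or a bounded dimension) by a tensor/dimension-reduction trick — or, more in line with \cite{NP21}, to work coordinate-by-coordinate after a suitable change of variables. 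The key algebraic object will be a polynomial in variables $x_v$ indexed by $v \in B$, of controlled degree, whose non-vanishing at a point with all coordinates in $A$ certifies the existence of the desired representation. The set $A$ of size $2\lfloor \log_2 p\rfloor$ should arise as (a scaling of) a set of ``base-$2$ digits'' type, i.e. something like $\{\pm 2^i : 0 \le i < \lfloor \log_2 p\rfloor\}$, chosen so that subset sums of $A$-multiples of a single nonzero vector already sweep out all of $\mathbb{F}_p$; the factor of $p$ copies of each basis gives enough room to realize an arbitrary target in every coordinate simultaneously.

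For part~(ii), where only $3$ bases are used and we merely need the combination to be nonzero (i.e. $\alpha_v \in \mathbb{F}_p^*$), I would set up the Combinatorial Nullstellensatz in the cleanest possible way: consider $P(x) = \prod_{v \in B}x_v \cdot Q(x)$ where $Q$ encodes the linear condition ``$\sum \alpha_v v = w$'' — but since that is a system of $n$ linear equations and we have $3n$ variables, after eliminating we are left with a genuinely $2n$-dimensional affine space of solutions over $\mathbb{F}_p$, and the question is whether this affine subspace avoids all the coordinate hyperplanes $x_v = 0$. Equivalently, writing the solution space parametrically, we must show that a certain product of $3n$ linear forms in $2n$ variables is not identically zero on $\mathbb{F}_p^{2n}$ when $p \ge 11$; this is exactly the kind of statement amenable to Alon's Combinatorial Nullstellensatz, provided the degree $3n$ is small enough relative to $p$ in each variable — which it will not be globally, so one must instead argue block-wise, exploiting that the three bases interact in a structured (block-triangular, after row reduction) way. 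The precise bound $p \ge 11$ presumably comes from needing to dodge a bounded number of ``bad'' residues in a short explicit case analysis, analogous to why $p \ge 5$ (rather than $p \ge 3$) is the threshold in Alon–Jaeger–Tarsi.

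The main obstacle, in both parts, is controlling the coefficient of the top-degree monomial of the relevant polynomial — i.e. showing the Combinatorial Nullstellensatz hypothesis is actually met rather than just plausible. For part~(i) this amounts to a clean combinatorial identity expressing that coefficient as a signed count (say, of certain permutations or perfect matchings) which does not vanish mod $p$; the contribution of \cite{NP21} that I would lean on is precisely the reformulation that turns this into a tractable enumeration, with the size-$O(\log p)$ set $A$ engineered so the enumeration is manifestly non-zero. For part~(ii), the obstacle is sharper: with only $3$ bases the ``slack'' is minimal, so the argument cannot afford to waste any room, and one likely has to choose the change of basis adaptively (e.g. put one of the three bases into a normal form relative to the other two, or pass to a generic linear image) so that the resulting polynomial factors through a product of short linear forms in few variables. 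I expect the bulk of the work is this normal-form reduction plus the small-prime exceptional analysis pinning down the constant $11$; the non-vanishing itself should then follow from a degree count once the reduction is in place.
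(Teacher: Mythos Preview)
Your plan heads down the wrong path. You assume that \cite{NP21} proves Alon--Jaeger--Tarsi via the Combinatorial Nullstellensatz with an explicit leading-coefficient computation; in fact the method there, and the method here, is a group-ring argument with no Nullstellensatz step and no coefficient calculation at all. The set $A$ in~(i) is not engineered so that subset sums sweep out $\mathbb{F}_p$; it is an \emph{arithmetic} (i.e.\ $1$-arithmetic) set, meaning every $a\in A$ is the midpoint of a $3$-term arithmetic progression lying in $A$, and every $a\notin A$ can be shifted into $A$ by one step of some progression. In~(ii) one takes $A=\mathbb{F}_p^*$, which is $\tfrac{p-3}{2}$-arithmetic, and the threshold $p\ge 11$ is simply the smallest prime with $p\big/\tfrac{p-3}{2}\le 3$; there is no small-prime case analysis.

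The actual mechanism is this. Call a multiset $V\subset\mathbb{F}_p^n$ \emph{$(r,\mathbb{F}_p)$-vanishing} if $\prod_{v\in V}(1-g^v)^r=0$ in the group ring $\mathbb{F}_p[\mathbb{F}_p^n]$, and \emph{irredundant} if no proper subset already vanishes. Olson's identity (Lemma~\ref{lemma:olson}) shows that any multiset of size exceeding $(p-1)n/r$ is $(r,\mathbb{F}_p)$-vanishing, so a union of $\lceil p/r\rceil$ bases contains an irredundant $V$. The heart of the paper (Lemma~\ref{lemma:main}) is a correction argument: if $V$ is $(r,\mathbb{F}_p)$-irredundant and $A$ is $r$-arithmetic, then \emph{every} $x\in\langle V\rangle$ can be written as $\sum_{v\in V}a_v v$ with all $a_v\in A$. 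One takes a representation minimizing the number of coefficients outside $A$ and, using irredundance of the group-ring product, extracts a short relation $\epsilon_w b_w w=\sum_{v\ne w}\epsilon_v b_v v$ with $\epsilon_v\in[-r,r]$ (Lemma~\ref{lemma:choice}) that slides one more coefficient into $A$ without knocking any out. Theorem~\ref{thm:main} then follows by straight induction on $n$ (Theorem~\ref{thm:AB}): extract an irredundant $V\subset B$, handle the $\langle V\rangle$-component of $w$ via Lemma~\ref{lemma:main}, and recurse in the quotient $\mathbb{F}_p^n/\langle V\rangle$, where the image of $B\setminus V$ still contains $\lceil p/r\rceil$ bases.

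Your Nullstellensatz sketch for~(ii), incidentally, reduces to finding a point of $\mathbb{F}_p^{2n}$ avoiding $3n$ affine hyperplanes --- which is itself a rectangular Alon--Jaeger--Tarsi instance, so you have circled back to a problem at least as hard as the one you started from. The degree obstruction you anticipate (``degree $3n$ is not small relative to $p$ in each variable'') is real, and is precisely why the paper does not take this route.
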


Note that for $p\geq 5$ we have $2\lfloor \log_2 p\rfloor<p$. Also, if $A\subset \mathbb{F}_p$ satisfies the theorem, then every translate of $A$ also satisfies it, so Theorem \ref{thm:main} indeed implies the conjecture of Szegedy for every $p\geq 5$.

The Additive Basis conjecture is motivated by a celebrated conjecture of Tutte (see \cite{S76}) about the existence of nowhere zero 3-flows in graphs. A \emph{modulo-$k$-orientation} in a graph $G$ is an orientation of the edges such that for every vertex, the in- and outdegree are equal modulo $k$. Let $c_3(k)$ denote the smallest constant such that any $c_3(k)$-edge-connected graph has a modulo-$k$-orientation. The conjecture of Tutte (see \cite{S76}) states that $c_3(3)=4$, which was extended by Jaeger \cite{J84} to $c_3(k)=2k-2$ for every odd $k$. However, the statement that $c_3(3)$ exists is already not obvious, and it was a long-standing conjecture of Jaeger \cite{J88}. This was recently proved by  Thomassen \cite{T12} by showing that $c_3(3)\leq 8$. In a subsequent paper, the upper bound was further improved to $c_3(3)\leq 6$ by Lov\'asz et al. \cite{LTWZ}, who also established that $c_3(k)\leq 3k-3$ for every odd $k$. In \cite{JLPT}, it is demonstrated that if the Additive Basis conjecture is true, then $c_3(p)\leq 2c_1(p)$ (more precisely, this was shown for $p=3$, but the same argument works in general).

\subsection{Coset covers}

Say that a covering of a set with a collection of its subsets is \emph{irredundant}, if it contains no proper subcollection forming a covering. An old result of Neumann \cite{N54} states that if $G$ is a group and $\{H_ix_i:i\in [k]\}$ is an irredundant covering of $G$ with cosets, then $|G:\bigcap_{i\in [k]}H_i|$ is finite, and in \cite{N54+}, he proved that $|G:\bigcap_{i\in [k]}H_i|$ is bounded by a function of $k$. Therefore, it makes sense to define $f(k)$ denoting the maximum of $|G:\bigcap_{i\in [k]}H_i|$, where $G$ is a group and $\{H_ix_i:i\in [k]\}$ is an irredundant covering of $G$ with cosets. Similarly, define $g(k)$ to be the maximum of $|G:\bigcap_{i\in [k]}H_i|$ if $\{H_i:i\in[k]\}$ is an irredundant covering of $G$ with subgroups. Tomkinson \cite{T87} proved that $f(k)=k!$ for every $k$, the lower bound achieved by the symmetric group, while $\Omega(3^{2k/3})=g(k)<(k-2)^{3}(k-3)!$ for $k\geq 5$. It is a longstanding open problem whether $g$ is at most exponential. 

Pyber \cite{P96} further proposed to study the functions $f_{A}(k),g_{A}(k)$, which are defined as $f(k)$ and $g(k)$, respectively, with the additional condition that $G$ is abelian. Pyber noted that it would be already interesting to show that $g_{A}(k)$ is bounded by an exponential function, however, the best known bounds were still $g_{A}(k)\leq f_{A}(k)=e^{O(k\log k)}$. Szegedy \cite{Sz07} conjectured, that unlike in the non-abelian case, $f_{A}(k)=e^{O(k)}$ should also hold. We make the first improvement over these old bounds on $f_{A}(k)$ and $g_{A}(k)$, and show that $f_{A}(k)$ grows indeed slower than $f(k)$.

\begin{theorem}\label{thm:coset}
Let $A$ be an abelian group, and let $\{H_{i}x_i:i\in [k]\}$ be an irredundant coset cover of $A$. Then $$|A:\bigcap_{i\in [k]}H_{i}|=e^{O(k\log\log k)}.$$
\end{theorem}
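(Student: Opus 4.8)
The plan is to reduce the problem to a linear-algebraic statement over $\mathbb{F}_p$ and then apply the machinery behind Theorem 1.1. First I would perform standard reductions: by a theorem of B.H. Neumann one may assume $\bigcap_i H_i = \{1\}$, so that $A$ is finite of order $N := |A:\bigcap H_i|$, and the goal becomes $\log N = O(k\log\log k)$. Since an irredundant cover of $A$ by $k$ cosets of subgroups $H_i$ remains irredundant after passing to any quotient, and since the $H_i$ are subgroups of an abelian group, I would try to decompose $A$ into its $p$-primary components and bound each separately; one must check that the total number of primes that can occur, and the number of cosets ``used'' in each component, behave well (this is where the $\log\log k$ seems likely to enter, via the fact that a product of $r$ distinct primes each appearing is at least the primorial, forcing $r = O(k/\log k)$ or similar). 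So the heart of the matter is: for a fixed prime $p$, if $A$ is a finite abelian $p$-group irredundantly covered by $k$ cosets of proper subgroups with trivial intersection, bound $|A|$.

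For the $p$-group case, the key step is to linearize. Each proper subgroup $H_i$ is contained in a maximal subgroup, i.e. the kernel of a surjection $A \to \mathbb{F}_p$; by replacing $H_i$ with such a maximal subgroup we only shrink the intersection's index, but we may lose irredundancy, so more care is needed — instead I would keep track of the quotient $A/H_i \cong$ some $p$-group and encode the coset $H_i x_i$ as an affine condition. The cleanest route: embed the covering into a covering of $\mathbb{F}_p^n$ (where $n$ is the number of invariant factors of $A$, which is itself $O(k)$ since each invariant factor forces at least one coset into the cover via irredundancy) by ``affine subspaces over $\mathbb{Z}/p^j$'', and then use a Combinatorial Nullstellensatz / polynomial-method argument exactly as in the proof of Theorem 1.1: the indicator that a point lies in \emph{no} coset is captured by a product of affine-linear forms (one per coset, raised to appropriate powers to handle the non-prime modulus), and if $|A|$ were too large this product polynomial would be forced to vanish everywhere while having a nonzero coefficient on a top monomial, a contradiction. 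The bookkeeping that turns ``degree of this product is small'' into ``$|A|$ is small'' is where the $e^{O(k\log\log k)}$, rather than $e^{O(k)}$, comes from.

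Concretely, the main steps in order are: \textbf{(1)} reduce to $\bigcap H_i$ trivial and $A$ finite; \textbf{(2)} split $A$ into $p$-primary parts and bound how the $k$ cosets distribute among the primes, getting that only $O(k/\log k)$ distinct primes occur and each part is covered by at most (roughly) its share of cosets plus a controlled overhead; \textbf{(3)} for each $p$-part, bound the number of invariant factors by $O(k_p)$ using irredundancy (drop a coordinate and the cover stays a cover, so every coordinate must be ``needed''); \textbf{(4)} set up the polynomial-method / Nullstellensatz argument over $\mathbb{Z}/p^j$ to bound the order of each $p$-part by $p^{O(k_p \log\log k)}$ or similar; \textbf{(5)} multiply the bounds over all primes and simplify. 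I expect step \textbf{(4)} to be the main obstacle: handling cosets of arbitrary subgroups of a $p$-group (not just index-$p$ subgroups) means working modulo prime powers, where the polynomial method is far less clean — one must replace linear forms by products of linear forms over the residue chain, and controlling the degree blow-up here is exactly what prevents reaching $e^{O(k)}$ and forces the extra $\log\log k$ factor. Step \textbf{(2)}, the distribution of cosets among primes, is a close second obstacle, since irredundancy does not localize perfectly and one must argue carefully that the overhead per prime is additive rather than multiplicative.
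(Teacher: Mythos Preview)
Your proposal has a genuine gap at step \textbf{(4)}, and it misidentifies where the $\log\log k$ comes from. The paper never works modulo prime powers and never uses the Combinatorial Nullstellensatz. Instead, the core estimate is proved entirely in the elementary abelian case $A=\mathbb{F}_p^n$ with all $H_i$ of index $p$ (affine hyperplanes): via the Fourier transform, an irredundant cover by translates of hyperplanes $\{x:\langle x,v_i\rangle=0\}$ is equivalent to the product $\prod_i(1-\lambda^{t_i}g^{v_i})$ vanishing in $\mathbb{C}[\mathbb{F}_p^n]$, and a counting argument using \emph{arithmetic} (balanced) subsets of $\mathbb{F}_p$ then gives $k\ge n\log p/\log s$ where $s=\Theta(\log p)$ is the size of the smallest such set. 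The $\log\log k$ loss is exactly the $\log s=\log\log p$ appearing here; it has nothing to do with lifting to $\mathbb{Z}/p^j$. Your proposed ``product of linear forms over the residue chain'' to handle non-maximal subgroups has no clear degree control and is the step most likely to fail outright.

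The paper also avoids your step \textbf{(2)} entirely: there is no primary decomposition and no attempt to localize irredundancy (which, as you correctly flag, does not localize). The reduction to the hyperplane case is by an induction (their Lemma~\ref{lemma:coset}) that takes a non-maximal $H_j$, replaces it by a maximal subgroup containing it, and when irredundancy breaks, extracts an irredundant subcover of a \emph{proper} quotient plus a chain of shrinking subgroups $B_0>B_1>\cdots$, applying the inductive bound at each step; the additive function $\lambda(N)=\sum n_i\lambda_{p_i}$ bookkeeps the total. A separate short lemma shows that once all subgroups are maximal and the intersection is trivial, $A$ is automatically $\mathbb{F}_p^n$ for a single prime $p$ (via the Frattini subgroup), so one lands in the hyperplane case without ever splitting over primes. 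Only at the very last line is there a crude case split on the largest prime $p\mid |A|$, using the trivial bound $\phi(A)\ge p$ when $|A|<e^p$.
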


A particularly interesting subcase of Theorem \ref{thm:coset} is when $A=\mathbb{F}_p^{n}$ is an \emph{elementary $p$-group} for some prime $p$. Szegedy \cite{Sz07} proposed the conjecture that if $p\geq 3$, then there exists $\epsilon(p)>0$ such that the codimension of $\bigcap_{i\in [k]}H_i$ is at most $k/(1+\epsilon(p))$. Szegedy showed that this conjecture is equivalent with the weak additive basis conjecture, and $\epsilon(p)\geq 1$ implies the Alon-Jaeger-Tarsi conjecture. One of our main lemmas, namely Lemma \ref{thm:hyperplane2}, states that this conjecture is indeed true for $p\geq 5$. More precisely, we show $\epsilon(p)=\Omega(\log p/\log\log p)$.

\subsection{Non-vanishing linear maps}

DeVos \cite{D00} proposed a substantial strengthening of the Alon-Jaeger-Tarsi conjecture, called the Choosability conjecture. A matrix $M\in \mathbb{F}_p^{n\times n}$ is \emph{$(a,b)$-choosable} if for all subsets $X_1,\dots,X_n, Y_1,\dots,Y_n\subset \mathbb{F}_p$ such that $|X_i|=a, |Y_i|=b$ for $i\in [n]$, there exists a vector $x\in X_1\times\dots\times X_n$ such that $Mx\in Y_1\times\dots\times Y_{n}$. The conjecture states that if $M$ is invertible, then it is $(k+2,p-k)$-choosable for every positive integer $k$. The first two authors \cite{NP21} proved that if $p\geq 61$, $p\neq 79$, then $M$ is $(p-1,p-1)$-choosable. Szegedy showed that this implies $\epsilon(p)\geq 1/2$.

Another far reaching generalization of the Alon-Jaeger-Tarsi conjecture was proposed by the first two authors \cite{NP21}. The following theorem resolves exactly this.

\begin{theorem}\label{thm:AT}
Let $k\geq 2$ be a positive integer, then there exists $p_0=p_0(k)$ such that the following holds for every positive integer $n$. Let $p>p_0$ be a prime, and let $M_1,\dots,M_{k}\in\mathbb{F}_{p}^{n\times n}$ be invertible matrices. Then there exists $x\in\mathbb{F}_p^{n}$ such that the vectors $M_1x,\dots,M_kx$ have no zero coordinates.
\end{theorem}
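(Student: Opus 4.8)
The plan is to reduce the statement about $k$ matrices to the single-matrix techniques developed for the Alon-Jaeger-Tarsi conjecture in \cite{NP21}, at the cost of working over an extension ring or with a larger ambient space, and then to exploit the freedom we have when $p$ is large. First I would pass to the following reformulation: we seek $x \in (\mathbb{F}_p^*)^n$ — recall $M_1$ invertible already forces us onto a coset-type condition — such that simultaneously $M_j x \in (\mathbb{F}_p^*)^n$ for all $j \in [k]$. Stacking, let $N \in \mathbb{F}_p^{kn \times n}$ be the matrix with blocks $M_1, \dots, M_k$; then we want $x$ with $x$ nowhere zero and $Nx$ nowhere zero. The obstacle compared to the square case is that $N$ is not square, but it has full column rank, and the combinatorial heart of \cite{NP21} (the polynomial/group-ring argument underlying Theorem \ref{thm:main} and Lemma \ref{thm:hyperplane2}) is really a statement about avoiding a bounded number of hyperplanes coordinatewise, which does not need squareness.

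The key steps, in order, are as follows. Step 1: dualize. Non-vanishing of $M_j x$ in coordinate $i$ says $\langle r_{j,i}, x\rangle \neq 0$ where $r_{j,i}$ is the $i$-th row of $M_j$; invertibility of each $M_j$ means the rows $\{r_{j,i}\}_i$ form a basis of $(\mathbb{F}_p^n)^*$ for each fixed $j$. Together with the standard basis (encoding non-vanishing of $x$ itself) we obtain a multiset $B \subset (\mathbb{F}_p^n)^*$ which is the union of $k+1$ linear bases, and we must find $x$ with $\langle v, x\rangle \neq 0$ for every $v \in B$. Step 2: this is exactly the situation governed by the coset-cover / additive-basis circle of ideas: the set of bad $x$ is a union of $|B| = (k+1)n$ hyperplanes through the origin, grouped into $k+1$ "spreads" of $n$ parallel-free hyperplanes each. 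Step 3: invoke the strong form of our hyperplane lemma. By Lemma \ref{thm:hyperplane2} (equivalently, the $p \geq 5$ case giving $\epsilon(p) = \Omega(\log p/\log\log p)$), a union of $\ell$ hyperplanes-through-origin coming from $\ell/ n'$ bases in $\mathbb{F}_p^{n'}$ cannot cover the whole space once $\ell < n'(1+\epsilon(p))$; here the relevant count is that $k+1$ bases give codimension-type savings, and for $p > p_0(k)$ we have $1 + \epsilon(p) > k+1$, so the $(k+1)n$ bad hyperplanes do not cover $\mathbb{F}_p^n$. That produces the desired $x$.

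I expect the main obstacle to be Step 3 in the precise form needed: Lemma \ref{thm:hyperplane2} as stated bounds the codimension of $\bigcap H_i$ for an \emph{irredundant} cover, whereas here the naive cover by $(k+1)n$ hyperplanes is wildly redundant (already $n$ of them alone — one basis' worth — cover nothing, but also fail to be irredundant as a subfamily of the union). The fix is to argue at the level of the generating function / Alon--Füredi-style nonvanishing-of-a-polynomial statement rather than the combinatorial covering statement: one forms the product $\prod_{v \in B} \langle v, x \rangle$, a polynomial of degree $(k+1)n$, and shows its expansion over the grid $(\mathbb{F}_p^*)^n$ (or an appropriately chosen grid of size roughly $p^n$ with $p \gg k$) is nonzero, using the Combinatorial Nullstellensatz together with the structural input from \cite{NP21} that controls the top-degree coefficients coming from a union of bases. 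The reason $p$ must be taken large depending on $k$ is precisely that the degree $(k+1)n$ must stay below the "capacity" $\approx (p-1)n$ of the grid with room to spare for the Nullstellensatz coefficient argument to go through; once $p - 1 > (k+1) \cdot \mathrm{const}$ this is exactly the regime where the techniques of \cite{NP21} and of the proof of Theorem \ref{thm:main}(ii) apply, and chasing the constants there yields an explicit admissible $p_0(k) = e^{O(k)}$.
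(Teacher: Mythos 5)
Your reduction in Steps~1--2 is exactly the one the paper uses: dualize, view the constraints as $(k+1)n$ (or just $kn$) hyperplanes through the origin, group them into the $k+1$ (resp.~$k$) bases, and try to show these hyperplanes cannot cover $\mathbb{F}_p^n$. You also correctly flag that Lemma~\ref{thm:hyperplane2} applies only to \emph{irredundant} covers, so one cannot naively feed in all $(k+1)n$ hyperplanes. But the fix you then propose --- a Combinatorial Nullstellensatz / Alon--F\"uredi argument about the top coefficients of $\prod_{v\in B}\langle v,x\rangle$ --- is a detour you neither need nor carry out, and it skips over the simple step that actually closes the gap.

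The right move, and what the paper does in Theorem~\ref{thm:strongAT}, is this. Suppose for contradiction that the bad hyperplanes cover $\mathbb{F}_p^n$. Extract an irredundant subcover, indexed by a set $J$ of (normal vector, translation) pairs, and let $V_J$ be the corresponding multiset of normal vectors. Lemma~\ref{thm:hyperplane2} gives the upper bound $\dim\langle V_J\rangle\le |J|\cdot\log s/\log p$. The crucial missing ingredient in your write-up is a \emph{lower} bound on $\dim\langle V_J\rangle$: because $V_J$ is drawn from the union of the $k$ bases (rows of the $M_i$), the pigeonhole principle gives an $i_0$ such that at least $|J|/k$ of the indices use rows of $M_{i_0}$; since $M_{i_0}$ is invertible those rows are linearly independent, so $\dim\langle V_J\rangle\ge |J|/k$ (in the choosability version with lists of size $p-r$ one loses another factor $r$, giving $|J|/(kr)$). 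Comparing the two bounds yields $1/k\le \log s/\log p$, i.e.\ $p\le s^{k}$, a contradiction once $p>s^{k}$. This is the ``codimension-type savings'' you gesture at, and once it is spelled out there is no need for any polynomial-method machinery; the hyperplane lemma plus pigeonhole suffices, and the argument is two displays long. Two smaller corrections: you do not need to add the standard basis (the condition is only on $M_1x,\dots,M_kx$, not on $x$); and since $s=\Theta(\log p)$, the inequality $s^{k}<p$ forces $p_0(k)=e^{\Theta(k\log k)}$ (the paper takes $p_0(k)=k^{10k}$), not the $e^{O(k)}$ you claim.
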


 Later, we further extend this theorem by providing a choosability version, which the interested reader can find as Theorem \ref{thm:strongAT}.

Our paper is organized as follows. In the next section, we introduce group rings and prove several lemmas about their properties. Many of these are reformulations and simplifications of results presented in \cite{NP21}. Then, in Section \ref{sect:additive}, we prove our main result about additive bases. In Section \ref{sect:coset}, we consider coset coverings of groups. Finally, in Section \ref{sect:maps}, we prove our result about non-vanishing linear maps.

\section{Group ring identities}\label{sect:groupring}

In this section, we establish several results about the group rings $\mathbb{F}_{p}[\mathbb{F}_p^{n}]$ and $\mathbb{C}[\mathbb{F}_p^{n}]$, which will serve as the main tools in our proofs. Given an additive group $G$ and a ring $R$, the \emph{group ring} $R[G]$ is the ring of formal expressions $\sum_{v\in G} r_v g^{v}$, where $r_v\in R$, and $g$ is a formal variable. Addition and multiplication are defined in the natural way, that is, $$\left(\sum_{v\in G} r_v g^{v}\right)+\left(\sum_{v\in G} r'_v g^{v}\right)=\sum_{v\in G} (r_v+r_v') g^{v},$$ and $$\left(\sum_{v\in G} r_v g^{v}\right)\cdot\left(\sum_{v\in G} r'_v g^{v}\right)=\sum_{v\in G} \left(\sum_{w\in G} r_w r'_{v-w}\right)g^{v}.$$
Note that an element $h=\sum_{v\in G} r_v g^{v}\in R[G]$ corresponds to the function $h^{*}:G\rightarrow R$ defined as $h^{*}(v)=r_v$. Then, the product $h_1\cdot h_2$ corresponds to the convolution $h_1^{*}*h_2^{*}$.

In this paper, we will consider the group rings $\mathbb{F}_{p}[\mathbb{F}_p^{n}]$ and $\mathbb{C}[\mathbb{F}_p^{n}]$, where $p$ is a prime. To simplify our notation, instead of $\sum_{y\in\mathbb{F}_p^{n}}$, we shall write $\sum_{y}$ if it causes no confusion. We will study the following notions.

\begin{definition}
For $r\in [p-1]$, say that a multiset $V\subset \mathbb{F}_p^{n}$ is \emph{$(r,\mathbb{F}_p)$-vanishing} if $$\prod_{v\in V}(1-g^{v})^{r}=0$$ 
in $\mathbb{F}_{p}[\mathbb{F}_p^{n}].$ Also, say that $V$ is   \emph{$(r,\mathbb{F}_p)$-irredundant} if $V$ is $(r,\mathbb{F}_p)$-vanishing, but no proper subset of $V$ is $(r,\mathbb{F}_p)$-vanishing. If $r=1$, we simply say $\mathbb{F}_p$-vanishing, and $\mathbb{F}_p$-irredundant, respectively.

Furthermore, $V$ is \emph{$(r,\mathbb{C})$-vanishing} if there exists $(t_v)_{v\in V}\in(\mathbb{F}_p)^{V}$ such that $$\prod_{v\in V}(1-\lambda^{t_{v}}g^{v})^{r}=0$$ 
in $\mathbb{C}[\mathbb{F}_p^{n}]$. Here, and in the rest of this paper, $\lambda=e^{2\pi i/p}$. Also, say that $V$ is  \emph{$(r,\mathbb{C})$-irredundant} if there exists $(t_v)_{v\in V}\in(\mathbb{F}_p)^{V}$ such that $\prod_{v\in V}(1-\lambda^{t_{v}}g^{v})^{r}=0$, but no proper subset $V'\subset V$ satisfies $\prod_{v\in V'}(1-\lambda^{t_{v}}g^{v})^{r}=0$. (Note that this is not equivalent to saying that no proper subset of $V$ is $(r,\mathbb{C})$-vanishing.) Again, if  $r=1$, we simply say $\mathbb{C}$-vanishing, and $\mathbb{C}$-irredundant, respectively.
\end{definition}

In \cite{NP21}, it was shown that if $V$ is $\mathbb{C}$-vanishing, then it is also $\mathbb{F}_p$-vanishing, however, we will not use this fact. The following simple observation is crucial:  being $\mathbb{C}$-vanishing and $\mathbb{F}_p$-vanishing is a projective property.

\begin{lemma}\label{lemma:minimal}
Let $R\in\{\mathbb{C},\mathbb{F}_{p}\}$, $r\in\mathbb{Z}^{+}$, let $V\subset\mathbb{F}_p^{n}$ be a multiset and $(a_{v})_{v\in V}\in(\mathbb{F}_p^{*})^{V}$. Set $V'=\{a_{v}v:v\in V\}$. Then $V$ is $(r,R)$-irredundant if and only if $V'$ is $(r,R)$-irredundant.
\end{lemma}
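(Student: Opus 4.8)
The plan is to reduce the whole statement to a single factorization identity in the group ring, together with the observation that one of the two factors is always invertible. Fix a vector $v\in\mathbb{F}_p^n$, a scalar $a\in\mathbb{F}_p^*$ with integer representative $\widehat a\in\{1,\dots,p-1\}$, and a twist $t\in\mathbb{F}_p$. Since $g^{av}=(g^{v})^{\widehat a}$, plugging $z=\lambda^{t}g^{v}$ (resp.\ $z=g^{v}$) into $1-z^{\widehat a}=(1-z)\sum_{j=0}^{\widehat a-1}z^{j}$ gives
$$1-\lambda^{\widehat a t}g^{av}=(1-\lambda^{t}g^{v})\,w_v\ \text{ in }\mathbb{C}[\mathbb{F}_p^n],\qquad 1-g^{av}=(1-g^{v})\,w_v'\ \text{ in }\mathbb{F}_p[\mathbb{F}_p^n],$$
where $w_v=\sum_{j=0}^{\widehat a-1}\lambda^{jt}g^{jv}$ and $w_v'=\sum_{j=0}^{\widehat a-1}g^{jv}$.

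I would then show that $w_v$ and $w_v'$ are units. Each lies in the subring generated by $g^v$, which is a quotient of $\mathbb{F}_p[x]/(x^p-1)$ (resp.\ $\mathbb{C}[x]/(x^p-1)$) via $x\mapsto g^v$ because $(g^v)^p=1$; as units map to units under a quotient and an inverse computed inside a subring is an inverse in the full ring, it suffices to check that $\sum_{j=0}^{\widehat a-1}x^{j}$ is a unit in $\mathbb{F}_p[x]/(x^p-1)$ and that $\sum_{j=0}^{\widehat a-1}(\lambda^{t}x)^{j}$ is a unit in $\mathbb{C}[x]/(x^p-1)$. The first ring equals $\mathbb{F}_p[x]/(x-1)^{p}$, which is local with maximal ideal $(x-1)$, and $\sum_{j=0}^{\widehat a-1}x^{j}$ takes the nonzero value $\widehat a$ at $x=1$, hence is a unit. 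In the second ring, by the Chinese Remainder Theorem a polynomial is a unit iff it does not vanish at any $p$-th root of unity, and $\sum_{j=0}^{\widehat a-1}(\lambda^{t}\zeta)^{j}$ equals $\widehat a$ when $\lambda^{t}\zeta=1$ and $\frac{(\lambda^{t}\zeta)^{\widehat a}-1}{\lambda^{t}\zeta-1}$ otherwise, which is nonzero since $\gcd(\widehat a,p)=1$ prevents $(\lambda^{t}\zeta)^{\widehat a}=1$ unless $\lambda^{t}\zeta=1$. (The degenerate case $v=0$ is covered by the same computation.)

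Given this, write $V=(v_i)_{i\in I}$ as an indexed family, so that $V'=(a_{v_i}v_i)_{i\in I}$ is indexed by the same $I$ and sub-multisets on both sides correspond to subsets $J\subseteq I$. For the $\mathbb{F}_p$ case, raising $1-g^{a_{v}v}=(1-g^{v})w_v'$ to the $r$-th power and multiplying over the terms indexed by $J$ yields $\prod_{i\in J}(1-g^{a_{v_i}v_i})^{r}=\bigl(\prod_{i\in J}(w_{v_i}')^{r}\bigr)\prod_{i\in J}(1-g^{v_i})^{r}$ with a unit prefactor, so the $J$-subfamily of $V$ is $(r,\mathbb{F}_p)$-vanishing exactly when that of $V'$ is; since $(r,\mathbb{F}_p)$-irredundancy is the conjunction ``$I$ vanishes, no proper $J$ vanishes'', it transfers. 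For the $\mathbb{C}$ case, if $(t_{v})_{v\in V}$ witnesses $(r,\mathbb{C})$-irredundancy of $V$, set $t_{v}':=\widehat{a_{v}}\,t_{v}$; the same multiplied identity (with the $\mathbb{C}$-version) shows $\prod_{i\in J}(1-\lambda^{t_{v_i}'}g^{a_{v_i}v_i})^{r}=\bigl(\prod_{i\in J}w_{v_i}^{r}\bigr)\prod_{i\in J}(1-\lambda^{t_{v_i}}g^{v_i})^{r}$ with a unit prefactor for every $J$, so $(t_{v}')$ witnesses $(r,\mathbb{C})$-irredundancy of $V'$; applying this last implication to $V'$ with the scalars $a_v^{-1}$, which returns $V$, gives the converse. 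The only step requiring a little care — not a genuine obstacle — is bookkeeping the correspondence of sub-multisets and of the per-element twists under $v\mapsto a_{v}v$; the invertibility of $w_v$ is exactly what makes every sub-multiset behave consistently.
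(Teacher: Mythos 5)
Your proof rests on exactly the same telescoping factorizations $(1-g^{a_v v})=(1-g^{v})\sum_{i=0}^{a_v-1}g^{iv}$ and $(1-\lambda^{a_v t_v}g^{a_v v})=(1-\lambda^{t_v}g^{v})\sum_{i=0}^{a_v-1}\lambda^{it_v}g^{iv}$ that constitute the paper's entire proof, so the approach is the same. The added content — checking carefully that the cofactors are units in the respective group rings (or, equivalently, invoking symmetry via $a_v^{-1}$) — simply spells out the ``follows easily'' step the paper leaves implicit, and it is correct.
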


\begin{proof}
In case $R=\mathbb{F}_p$, this follows easily from the identity $$(1-g^{a_v v})=(1-g^{v})\cdot\left(\sum_{i=0}^{a_{v}-1}g^{iv}\right),$$ and in case $R=\mathbb{C}$, this follows from the identity $$(1-\lambda^{a_v t_v}g^{a_v v})=(1-\lambda^{t_v}g^{v})\cdot\left(\sum_{i=0}^{a_{v}-1}\lambda^{it_v}g^{iv}\right).$$
\end{proof}

One of the first (and very few) applications of group ring identities in combinatorics is the famous theorem of Olson \cite{O68} about vanishing sums in abelian groups whose order is a prime power. Olson's proof relied on an identity, which we state (and prove, for completeness) in a somewhat weaker form.

\begin{lemma}\label{lemma:olson}
Let $V\subset \mathbb{F}_p^{n}$ be a multiset of size at least $(p-1)n+1$. Then $V$ is $\mathbb{F}_p$-vanishing.
\end{lemma}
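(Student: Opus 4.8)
My plan is to prove this by induction on $n$, peeling off one coordinate at a time and using the elementary fact that in $\mathbb{F}_p[\mathbb{Z}/p]$ one has $(1-g^{e})^{p}=1-g^{pe}=0$ whenever $e$ is a nonzero vector of order $p$. More precisely, write $\mathbb{F}_p^{n}=\mathbb{F}_p^{n-1}\times\mathbb{F}_p$, and let $\pi:\mathbb{F}_p^{n}\to\mathbb{F}_p^{n-1}$ be the projection onto the first $n-1$ coordinates. Given the multiset $V$ of size at least $(p-1)n+1$, I would split $V$ into $V_0=\{v\in V: \text{last coordinate }0\}$ and $V_1=V\setminus V_0$. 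If $|V_1|\ge p$, then among the elements of $V_1$ at least — well, here one has to be a little careful, since the last coordinates of the elements of $V_1$ need not coincide; but by Lemma \ref{lemma:minimal} we are free to rescale each $v\in V$ by a nonzero scalar without affecting the $\mathbb{F}_p$-vanishing property, so I may rescale every element of $V_1$ so that its last coordinate becomes $1$. Then I look at the first coordinates of these rescaled vectors (i.e. their images under $\pi$).

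The cleaner way to organize the induction: by rescaling, assume every $v\in V_1$ has last coordinate $1$. Consider the ambient group $\mathbb{F}_p^{n}$. If $|V_1|\ge (p-1)+1=p$, pick $p$ of them, say $w_1,\dots,w_p$, and observe
$$\prod_{j=1}^{p}(1-g^{w_j})=\prod_{j=1}^{p}\bigl(1-g^{w_1}g^{w_j-w_1}\bigr),$$
where each $w_j-w_1$ lies in the subgroup $\mathbb{F}_p^{n-1}\times\{0\}$. Actually the identity I want is the one underlying Olson's argument: in $\mathbb{F}_p[\mathbb{F}_p^n]$,
$$\prod_{j=1}^{p}(1-g^{w_j}) = \sum_{S\subseteq[p]}(-1)^{|S|} g^{\sum_{j\in S}w_j},$$
and reducing the exponent modulo the relation (order $p$), the terms collapse; the key point is that the coefficient of $g^{u}$ for each $u$ becomes divisible by $p$ because the last-coordinate sum $\sum_{j\in S}|S|$ forces $|S|$ to be congruent modulo $p$, and the binomial-type count of such $S$ is a multiple of $p$. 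I would state this as a sublemma: \emph{if $w_1,\dots,w_p\in\mathbb{F}_p^{n}$ all have the same nonzero value in some fixed coordinate, then $\prod_{j=1}^p(1-g^{w_j})$, after multiplying by a suitable monomial, equals $\prod_{j=1}^{p-1}(1-g^{w_j-w_p})$ times something — no,} let me instead just run the induction directly.

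So the real induction step is: if $|V|\ge (p-1)n+1$, then either $|V_0|\ge (p-1)(n-1)+1$, in which case $V_0$ already spans a vanishing product inside the subgroup $\mathbb{F}_p^{n-1}\times\{0\}$ by induction (and a vanishing product of a subset multiplies up to a vanishing product of $V$), or else $|V_0|\le (p-1)(n-1)$, which forces $|V_1|\ge |V|-|V_0|\ge (p-1)n+1-(p-1)(n-1)=p$. In the latter case, rescale all of $V_1$ to have last coordinate $1$, pick $w_1,\dots,w_p\in V_1$, and use the identity
$$\prod_{j=1}^{p}(1-g^{w_j}) = \sum_{i=0}^{p}(-1)^{i}\binom{p}{i}\,\text{(average over choices)} + \bigl(\text{terms that vanish mod }p\bigr);$$
concretely, expanding and grouping monomials by the value $\ell\bmod p$ of the last coordinate, one checks that the whole product lies in the ideal generated by $p$, hence is $0$ in $\mathbb{F}_p[\mathbb{F}_p^n]$. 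I would write this last computation out cleanly: $\prod_{j=1}^p(1-g^{w_j})=\sum_{S\subseteq[p]}(-1)^{|S|}g^{\sum_{j\in S}w_j}$; group the $2^p$ subsets $S$ by the coset $\sum_{j\in S}w_j + (\mathbb{F}_p^{n-1}\times\{0\})$, which is determined by $|S|\bmod p$; for each residue $\ell$ there are $\binom{p}{\ell}$ subsets of each size $\equiv\ell$, hmm this needs the $w_j-w_1$ to be "generic", which they are not.

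Given that subtlety, the cleanest correct route is Olson's original: prove by induction on $n$ that any product of $(p-1)n+1$ factors $(1-g^{v_i})$ vanishes, using at the inductive step the identity, valid for any $a\in\mathbb{F}_p^n$ of order dividing $p$,
$$(1-g^{a})^{p}=(1-g^{a})\cdot(1-g^{a})^{p-1}=(1-g^{pa})=0\quad\text{in }\mathbb{F}_p[\mathbb{F}_p^n],$$
combined with: after rescaling (Lemma \ref{lemma:minimal}) and relabeling, if not enough vectors lie in a fixed hyperplane $H\cong\mathbb{F}_p^{n-1}$, then $\ge p$ of them, say $w_1,\dots,w_p$, satisfy $w_j=u+h_j$ with $u\notin H$ fixed and $h_j\in H$; then
$$\prod_{j=1}^{p}(1-g^{w_j})=\prod_{j=1}^{p}\bigl(1-g^{u}g^{h_j}\bigr),$$
and one expands this as a polynomial in $x:=g^{u}$ of degree $p$ with coefficients in $\mathbb{F}_p[H]$; since $x$ has order $p$, the degree-$p$ term wraps around to the constant term; now the top coefficient is $\prod g^{h_j}=g^{\sum h_j}$, the constant term is $1$, and the general coefficient of $x^{i}$ is $(-1)^{i}e_i(g^{h_1},\dots,g^{h_p})$; reducing mod $x^p=1$ merges coefficient $i$ with coefficient $i-p$, and the resulting element of $\mathbb{F}_p[H]$ is exactly $\prod_{j=1}^{p}(1-y g^{h_j})\big|_{y=1}$ evaluated in the quotient ring $\mathbb{F}_p[H][y]/(y^p-1)$; but $\prod_{j}(1-yg^{h_j})$ specialized suitably, together with $(1-y)^p=1-y^p$ in characteristic $p$, is seen to vanish. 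I expect this merging/reduction computation — showing the "wrap-around" really produces $0$ and not merely something in a smaller subgroup — to be the main technical obstacle, and I would handle it by the clean substitution $g^{u}\mapsto$ a formal $p$-th root of unity and invoking the Frobenius identity $(a+b)^p=a^p+b^p$ in $\mathbb{F}_p[H]$, so that $\prod_{j=1}^p(1-g^{u}g^{h_j}) = \bigl(\text{something}\bigr)$ collapses; the remaining bookkeeping (that a vanishing subproduct multiplies up to a vanishing product over all of $V$) is immediate from commutativity of the group ring.
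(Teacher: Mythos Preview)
Your induction has a genuine gap at the heart of the second case. After rescaling so that $w_1,\dots,w_p$ all have last coordinate $1$, you repeatedly try to argue that $\prod_{j=1}^{p}(1-g^{w_j})=0$. This is simply false. Take $p=3$, $n=2$, and $w_1=(0,1)$, $w_2=(1,1)$, $w_3=(2,1)$; writing $a=g^{(1,0)}$, $b=g^{(0,1)}$, one computes
\[
(1-b)(1-ab)(1-a^2b)=2ab+2b+ab^{2}+2a^{2}b+b^{2}+a^{2}b^{2}\neq 0
\]
in $\mathbb{F}_3[\mathbb{F}_3^{2}]$. The identity $(1-g^{a})^{p}=0$ you invoke requires all $p$ factors to be equal; once the $w_j$ differ in the first $n-1$ coordinates there is no Frobenius collapse, and the ``wrap-around'' computation you sketch cannot produce zero. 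More structurally, a product of only $p$ factors $(1-g^{v})$ in $\mathbb{F}_p[\mathbb{F}_p^{n}]$ cannot vanish unless those $v$'s are confined to a line, since the augmentation ideal has nilpotency index exactly $(p-1)n+1$. So your dichotomy is too coarse: knowing $|V_1|\ge p$ gives you nothing.

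The paper's argument sidesteps this entirely. Instead of peeling off one coordinate and hunting for a small vanishing subproduct, it fixes a basis $e_1,\dots,e_n$ and writes each factor as $1-g^{v}=\sum_{i=1}^{n}f_{v,i}(1-g^{e_i})$ for suitable $f_{v,i}\in\mathbb{F}_p[\mathbb{F}_p^{n}]$. Expanding the whole product then yields a sum of terms of the form $f\cdot(1-g^{e_1})^{\alpha_1}\cdots(1-g^{e_n})^{\alpha_n}$ with $\alpha_1+\cdots+\alpha_n=|V|\ge(p-1)n+1$; by pigeonhole some $\alpha_i\ge p$, and $(1-g^{e_i})^{p}=0$. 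No induction, no case split, no subtle wrap-around: the point is that the vanishing is a global phenomenon of the full product, not of any short subproduct.
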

\begin{proof}
Let $e_1,\dots,e_n$ be a basis of $\mathbb{F}_p^{n}$. Note that if $v\in \mathbb{F}_p^{n}$, then we can write $$1-g^{v}=\sum_{i=1}^{n}f_{v,i}\cdot(1-g^{e_i})$$ with suitable $f_{v,1},\dots,f_{v,n}\in\mathbb{F}_p[\mathbb{F}_p^{n}]$. Indeed, if $v=\sum_{i=1}^{n}b_i e_i$, then
$$1-g^{v}=\sum_{i=1}^{n} g^{b_1e_1+\dots+b_{i-1}e_{i-1}}\cdot(1-g^{b_i e_i}),$$
so we can take $f_{v,i}=g^{b_1e_1+\dots+b_{i-1}e_{i-1}}\cdot(1+g^{e_i}+\dots+g^{(b_{i}-1)e_i})$. Consider the product
$$\prod_{v\in V}(1-g^{v})=\prod_{v\in V}\left(\sum_{i=1}^{n}f_{v,i}(1-g^{e_i})\right).$$
After expanding the outer brackets on the right hand side, we get a sum, whose every term has the form $f(1-g^{e_1})^{\alpha_1}\dots (1-g^{e_n})^{\alpha_n}$, where $\alpha_1+\dots+\alpha_n=|V|>(p-1)n$ and $f\in\mathbb{F}_p[\mathbb{F}_p^n]$. Therefore, in each such term, at least one of the $\alpha_i$'s is at least $p$. But $(1-g^{e_i})^{p}=0$, so every term evaluates to~0.
\end{proof}

\begin{corollary}\label{cor:olson}
If $r\in [p-1]$ and $V\subset \mathbb{F}_p^{n}$ is a multiset of size at least $((p-1)n+1)/r$, then $V$ is $(r,\mathbb{F}_p)$-vanishing.
\end{corollary}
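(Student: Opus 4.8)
The plan is to reduce the statement directly to Lemma~\ref{lemma:olson} by passing to an inflated multiset. Given $V$ with $|V|\geq ((p-1)n+1)/r$, I would let $W$ be the multiset obtained from $V$ by taking $r$ copies of each element (with multiplicity, so an element of multiplicity $m$ in $V$ has multiplicity $rm$ in $W$). Then $|W|=r|V|\geq r\cdot((p-1)n+1)/r=(p-1)n+1$, so Lemma~\ref{lemma:olson} applies and gives $\prod_{w\in W}(1-g^{w})=0$ in $\mathbb{F}_p[\mathbb{F}_p^{n}]$.

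The only remaining point is to observe that $\prod_{w\in W}(1-g^{w})=\prod_{v\in V}(1-g^{v})^{r}$, which is immediate from the definition of $W$ and commutativity of $\mathbb{F}_p[\mathbb{F}_p^{n}]$. Hence $\prod_{v\in V}(1-g^{v})^{r}=0$, i.e.\ $V$ is $(r,\mathbb{F}_p)$-vanishing.

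I do not anticipate any genuine obstacle here: the corollary is a one-line consequence of Lemma~\ref{lemma:olson}, the only thing to be careful about is that $r|V|$ is automatically a nonnegative integer (since $r,|V|\in\mathbb{Z}^{+}$) and that the bound on $|V|$ was stated precisely so that clearing the denominator yields exactly the threshold $(p-1)n+1$ needed to invoke the lemma.
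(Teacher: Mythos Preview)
Your proposal is correct and matches the paper's proof exactly: the paper also forms the multiset consisting of $r$ copies of every element of $V$ and applies Lemma~\ref{lemma:olson} to it. There is nothing more to add.
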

\begin{proof}
 Apply Lemma \ref{lemma:olson} for the multiset $V'$ formed by $r$ copies of every element of $V$.
\end{proof}

Olson \cite{O68} observed that if in the product $\prod_{v\in V}(1-g^{v})=\sum_{y}c_y g^{y}$ the constant term vanishes (that is, $c_{0}=0$), then $V$ must contain a nonempty subset whose elements sum to 0. However, we will show that the whole product $\prod_{v\in V}(1-g^{v})$ being zero carries much more information about $V$. In particular, all of our main results rely on the following key lemma, which we state after providing a further definition. For integers $a<b$, let $[a,b]=\{a,a+1,\dots,b\}$.

\begin{definition}
 For $r\in [p-1]$, say that a set $A\subset \mathbb{F}_p$ is \emph{$r$-arithmetic} if for every $a\in A$ there exists $b\in\mathbb{F}_p^{*}=\mathbb{F}_p\setminus\{0\}$ such that $a+ib\in A$ for $i\in [-r,r]$, and for every $a\in \mathbb{F}_p\setminus A$, there exists $b\in A$ such that $a+ib\in A$ for $i\in [r]$. If $r=1$, say simply that $A$ is arithmetic.
\end{definition}

\noindent
In other words, a set $A$ is (1-)arithmetic if every element of $A$ is the middle term of a 3-term arithmetic progression. The study of arithmetic sets (also referred to as {\it balanced} sets) was initiated by Straus \cite{Straus76} in 1976, who showed that the minimum size of an arithmetic set is $\Theta(\log p)$. Browkin, Divi\v{s} and Schinzel \cite{BDS76} proved that the minimum is at least $1+\log_2 p$, while Nedev \cite{N09} established the almost matching upper bound $(1+o(1))\log_2 p$.

Now we are ready to state our key result.

\begin{lemma}\label{lemma:main}
Let $r\in[p-1]$ and $R\in \{\mathbb{F}_p,\mathbb{C}\}$, let $A\subset \mathbb{F}_p$ be an $r$-arithmetic set, and let $V\subset \mathbb{F}_p^{n}$ be an $(r,R)$-irredundant multiset. Then for every $x\in \langle V\rangle$ there exists $(a_v)_{v\in V}\in A^{V}$ such that
$$x=\sum_{v\in V}a_{v} v.$$
\end{lemma}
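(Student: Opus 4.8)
The plan is to proceed by induction on $|V|$, using the projective invariance from Lemma~\ref{lemma:minimal} to normalize $V$ at each step and then peel off one element. Fix an $(r,R)$-irredundant multiset $V$ and some target $x\in\langle V\rangle$; pick any element $u\in V$ and write $V=\{u\}\cup W$. The key point is that $W$ itself is \emph{not} $(r,R)$-vanishing (by irredundancy of $V$), and more importantly $\prod_{v\in W}(1-\cdots)^r\neq 0$, so there is some vector $y$ where this product is nonzero. I would like to say that $W$ (after passing to a suitable irredundant sub-multiset) lets me write $x-a_u u$ as an $A$-combination of $W$ for an appropriate choice of $a_u\in A$, but the subtlety is that a sub-multiset of $W$ need not span a subspace containing $x-a_uu$, nor be irredundant in a way that matches the target. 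So the induction needs to be set up more carefully, tracking which coset/subspace the target lives in.

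Here is the cleaner route I would actually take. Work with the function $h^*=\left(\prod_{v\in V}(1-\chi_v g^{v})^{r}\right)^{*}$, where $\chi_v$ is $1$ in the $\mathbb{F}_p$ case and $\lambda^{t_v}$ in the $\mathbb{C}$ case. Vanishing of this product means $h^*\equiv 0$; irredundancy means that removing any single $v$ gives a nonzero function. Now $1-\chi_v g^v$ acts on functions as a difference operator, and $r$-arithmeticity of $A$ is precisely engineered so that the operator $\sum_{i=-r}^{r}(\text{shifts by }iv)$ — or rather the statement ``$a$ is the center of a $(2r+1)$-term progression inside $A$'' — can be used to rewrite $(1-\chi_v g^v)^r$ in terms of $\prod_{i}(1-\chi^{?}g^{a_v v})$-type factors. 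Concretely, I expect the identity driving everything is that for a suitable scalar $b$, the element $1-g^{v}$ (or $(1-g^v)^r$) times an appropriate unit equals a telescoping combination that, when applied to the coordinate indexed by $x$, produces exactly the coefficient extraction $[g^x]$ of a shifted product, forcing $x=\sum a_v v$ with all $a_v\in A$. The two clauses in the definition of $r$-arithmetic (one for $a\in A$, one for $a\notin A$) correspond to the two cases of whether, in the peeling step, the ``natural'' coefficient we are forced to use already lies in $A$ or whether we must correct it using a second element $b\in A$ as a multiplier.

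So the induction step is: given irredundant $V$ with $h^*\equiv 0$ but $h^*_W\not\equiv 0$ for $W=V\setminus\{u\}$, choose $y\in\mathbb{F}_p^n$ with $h^*_W(y)\neq 0$; expand $h^* = (1-\chi_u g^u)^r \cdot h_W$ and read off the coordinate at $x$. This expresses $0=h^*(x)$ as a signed sum over $j\in\{0,1,\dots,r\}$ (with binomial/unit coefficients) of values $h^*_W(x-ju)$. Since not all of these can vanish relative to what we need, there is an index $j_0$ and, using $r$-arithmeticity applied to the relevant scalar, a choice $a_u\in A$ together with a replacement of $W$ by its image $a_u^{-1}$-scaled (or by a shorter irredundant sub-multiset, via Lemma~\ref{lemma:minimal}) such that $x-a_u u\in\langle W\rangle$ and $W$ is $(r,R)$-irredundant with the right vanishing pattern — then apply induction to $x-a_u u$ and $W$. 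The base case $|V|=1$ forces $v$ to be $(r,R)$-vanishing by itself, which for $R=\mathbb{F}_p$ means $r\cdot(\text{something})\equiv 0$, pinning down $x$ as a small multiple of $v$ that the $r$-arithmetic condition guarantees lies in $A\cdot v$.

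The main obstacle I anticipate is the bookkeeping in the peeling step: ensuring that after choosing $a_u$ and discarding it, the remaining multiset $W$ (or a sub-multiset) is genuinely $(r,R)$-irredundant \emph{and} has $x-a_u u$ in its span, simultaneously. Irredundancy can be destroyed when passing to sub-multisets, and the span condition interacts with which element we peeled. I expect this is handled by choosing $u$ not arbitrarily but as an element lying in the support of the ``obstruction'' — i.e.\ an element $u$ such that $x$ is expressible using $V$ only if $u$ participates — and then using the $(r,R)$-irredundancy of $V$ to guarantee that the restricted product over $W$ is, after scaling, again irredundant. The $\mathbb{C}$ case carries the extra burden of the auxiliary phases $(t_v)$, but these transform covariantly under the scaling in Lemma~\ref{lemma:minimal}, so the same argument goes through verbatim once stated abstractly in terms of $\chi_v$.
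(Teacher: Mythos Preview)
Your proposal has a genuine structural gap, and it is exactly the obstacle you flag at the end but do not resolve. Induction on $|V|$ cannot close: if $V$ is $(r,R)$-irredundant, then by definition \emph{no} proper sub-multiset of $V$ is $(r,R)$-vanishing. In particular, $W=V\setminus\{u\}$ is not vanishing, and neither is any sub-multiset of $W$, so there is simply no $(r,R)$-irredundant multiset sitting inside $W$ to which you could apply the inductive hypothesis. Rescaling $W$ via Lemma~\ref{lemma:minimal} does not help, since that lemma preserves (non-)vanishing. Your proposed fix of ``passing to a shorter irredundant sub-multiset'' is therefore impossible, and the induction collapses immediately after the degenerate base case $|V|=1$ (which forces $v=0$).

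The paper's proof avoids this entirely by never shrinking $V$. Instead it runs a descent on the number of coefficients outside $A$: start with \emph{any} representation $x=\sum_{v\in V} a'_v v$ minimizing $|\{v:a'_v\notin A\}|$, pick a bad index $w$, and use the full irredundant $V$ to manufacture a linear relation $\epsilon_w b_w w=\sum_{v\neq w}\epsilon_v b_v v$ with all $\epsilon_v\in[-r,r]$ and $\epsilon_w\in[r]$, where the $b_v\in\mathbb{F}_p^*$ are chosen in advance from the $r$-arithmetic property so that adding this relation to the representation fixes $a'_w$ without breaking any coefficient that was already in $A$. The existence of such a relation (Lemma~\ref{lemma:choice}) is proved precisely by the computation you sketched in your middle paragraph: expand $0=(1-\chi_w g^{b_w w})^r\cdot\prod_{v\neq w}(1-\chi_v g^{b_v v})^r$, pick a coordinate where the right factor does not vanish, and compare. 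So your core analytic idea is correct and is what drives the argument; the error is packaging it as an induction on $|V|$ rather than as a correction step applied repeatedly to a single representation over the fixed multiset $V$.
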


We prepare the proof of this lemma with another.

\begin{lemma}\label{lemma:choice}
Let $r\in[p-1]$, and $R\in \{\mathbb{F}_p,\mathbb{C}\}$, and let $V\subset \mathbb{F}_p^{n}$ be an $(r,R)$-irredundant multiset. Given $(b_v)_{v\in V}\in (\mathbb{F}_{p}^{*})^{V}$ and $w\in V$, there exists $(\epsilon_v)_{v\in V\setminus\{w\}}\in [-r,r]^{V\setminus \{w\}}$ and $\epsilon_w\in [r]$ such that 
$$\epsilon_w b_{w} w=\sum_{v\in V\setminus\{w\}} \epsilon_v b_v v.$$
\end{lemma}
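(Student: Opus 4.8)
The plan is first to reduce to the case $b_v=1$ for all $v$ using Lemma~\ref{lemma:minimal}. Indeed, if we set $u_v=b_vv$ and $V'=\{u_v:v\in V\}$, then $V'$ is again $(r,R)$-irredundant, and an identity $\epsilon_w u_w=\sum_{v\in V\setminus\{w\}}\epsilon_v u_v$ for $V'$ is exactly the desired identity $\epsilon_w b_w w=\sum_{v\in V\setminus\{w\}}\epsilon_v b_v v$ for $V$. So assume $b_v=1$; we must find $\epsilon_w\in[r]$ and $(\epsilon_v)_{v\in V\setminus\{w\}}\in[-r,r]^{V\setminus\{w\}}$ with $\epsilon_w w=\sum_{v\in V\setminus\{w\}}\epsilon_v v$. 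If $w=0$ this holds trivially with $\epsilon_w=1$ and all other $\epsilon_v=0$, so we may also assume $w\neq0$, so that $\langle w\rangle$ has order $p$.

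Next I would set up the relevant group ring element. Write $\eta_v=1$ when $R=\mathbb{F}_p$, and $\eta_v=\lambda^{t_v}$ when $R=\mathbb{C}$, where $(t_v)_{v\in V}$ is a witness for $(r,\mathbb{C})$-irredundancy. By irredundancy, $Q:=\prod_{v\in V\setminus\{w\}}(1-\eta_vg^v)^r$ is nonzero in $R[\mathbb{F}_p^n]$, while $Q\cdot(1-\eta_wg^w)^r=0$. Expanding $(1-\eta_vg^v)^r=\sum_{j=0}^r\binom{r}{j}(-\eta_v)^jg^{jv}$ and treating repeated copies of a vector separately, we see that whenever the coefficient $c_y$ of $g^y$ in $Q=\sum_yc_yg^y$ is nonzero, there is some $(j_v)_{v\in V\setminus\{w\}}\in[0,r]^{V\setminus\{w\}}$ with $y=\sum_{v\in V\setminus\{w\}}j_vv$. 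Hence it suffices to find $y,y'$ with $c_y\neq0$, $c_{y'}\neq0$, and $y'-y=\epsilon_w w$ for some $\epsilon_w\in[r]$: subtracting the two representations yields $\epsilon_w w=\sum_{v\in V\setminus\{w\}}(j'_v-j_v)v$ with $j'_v-j_v\in[-r,r]$, which is what we want.

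To find such $y,y'$, I would partition $\mathbb{F}_p^n$ into cosets of $\langle w\rangle$; identifying $\langle w\rangle$ with $\mathbb{Z}/p$ via $tw\leftrightarrow t$, on each coset $C$ the coefficients of $Q$ define a function $\tilde c_C\colon\mathbb{Z}/p\to R$, and $Q(1-\eta_wg^w)^r=0$ says precisely that $(\mathrm{id}-\eta_wS)^r\tilde c_C=0$ for every $C$, where $S$ is the shift. Since $Q\neq0$, some $\tilde c_C$ is nonzero; encoding it as a polynomial $c(z)=\sum_{t=0}^{p-1}\tilde c_C(t)z^t$, the relation becomes $(1-\eta_wz)^rc(z)\equiv0\pmod{z^p-1}$ over $R$. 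When $R=\mathbb{C}$, the modulus $z^p-1$ factors into distinct linear terms, which forces $c$ to be a nonzero scalar multiple of $(z^p-1)/(z-\eta_w^{-1})$; every coefficient of this polynomial is a root of unity, so $\tilde c_C$ is nowhere zero and we may take $t'=t+1$ (note $1\in[r]$). When $R=\mathbb{F}_p$ we have $z^p-1=(z-1)^p$, so the relation forces $(z-1)^{p-r}\mid c(z)$, i.e. the formal derivatives $c,c',\dots,c^{(p-r-1)}$ all vanish at $z=1$. Here I would argue by contradiction: if no two elements of the support $T\subseteq\mathbb{Z}/p$ of $\tilde c_C$ differed by an element of $[r]$, then all cyclic gaps of $T$ would be at least $r+1$, so $|T|\leq p/(r+1)\leq p-r$; but the vanishing derivatives give $\sum_{t\in T}\tilde c_C(t)\,t^k=0$ for $k=0,\dots,|T|-1$, and the Vandermonde matrix in the distinct values $t\in T$ then forces $\tilde c_C\equiv0$, a contradiction.

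The reduction to $b_v=1$ and the expansion of $Q$ are routine bookkeeping; the real content is the last step — understanding the kernel of multiplication by $(1-\eta_wz)^r$ modulo $z^p-1$ well enough to produce two nonzero coefficients of $c$ at positions differing by at most $r$. I expect the $R=\mathbb{F}_p$ case to be the main obstacle: there $z^p-1$ is a perfect $p$-th power, so the coefficient function need not be nowhere zero, and one must rule out a sparse, widely spaced support via the high-multiplicity-root / Vandermonde argument sketched above.
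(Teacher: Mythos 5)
Your proof is correct, but it takes a considerably longer and genuinely different route than the paper's, and the extra machinery in the $R=\mathbb{F}_p$ case turns out to be unnecessary. The paper's argument is purely local: it sets $Q=\prod_{v\in V\setminus\{w\}}(1-g^{b_vv})^r=\sum_y c_y g^y$, picks any $z$ with $c_z\neq0$ (such $z$ exists by irredundancy), and simply reads off the coefficient of $g^z$ in the identity $(1-g^{b_ww})^r\,Q=0$, namely
\[
\sum_{i=0}^{r}\binom{r}{i}(-1)^i c_{z-ib_ww}=0 .
\]
Since the $i=0$ term has coefficient $1$ and $c_z\neq 0$, some $c_{z-ib_ww}$ with $i\in[r]$ is nonzero; combining the two representations coming from the support description of $Q$ (your observation as well) finishes the proof in a few lines, with no case split and no reduction to $b_v=1$. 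Your approach instead reorganizes the same vanishing identity globally along cosets of $\langle w\rangle$, converting it into a divisibility $(1-\eta_w z)^r c(z)\equiv 0\pmod{z^p-1}$, and then argues about the zero set of a solution: over $\mathbb{C}$ the Chinese Remainder Theorem pins $c$ down to a scalar multiple of $(z^p-1)/(z-\eta_w^{-1})$ with all coefficients nonzero, and over $\mathbb{F}_p$ you rule out a sparse support via a Vandermonde / high-order-vanishing argument. This is a valid and in fact structurally informative analysis of the kernel of multiplication by $(1-\eta_w z)^r$ modulo $z^p-1$, but it buys you strictly more than the lemma asks for: the lemma only needs that in each coset the support cannot have a ``gap of length $r$'' around a fixed nonzero position, and that already follows from one linear relation among $r+1$ consecutive values. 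The main price of your route is length and a case distinction that the paper avoids; the main payoff is a sharper structural description (e.g.\ over $\mathbb{C}$, the coefficient function is nowhere zero on a nonzero coset), which the paper does not need here.
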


\begin{proof}
Let us prove this for $R=\mathbb{F}_p$, the case $R=\mathbb{C}$ follows in the same manner. By Lemma \ref{lemma:minimal}, the set $V'=\{b_v v:v\in V\}$ is also $(r,\mathbb{F}_p)$-irredundant. Write
$$\prod_{v\in V\setminus\{w\}}(1-g^{b_{v}v})^{r}=\sum_{y}c_y g^{y},$$
where $c_{y}\in\mathbb{F}_p$. As $V'$ is $\mathbb{F}_p$-irredundant, there exists some $z\in\mathbb{F}_p^{n}$ such that $c_z\neq 0$. We make two observations.
\begin{itemize}
    \item[(i)] If $c_y\neq 0$, then $y$ is a linear combination of $(b_v v)_{v\in V\setminus \{w\}}$ with coefficients in $[0,r]$.
    \item[(ii)] If $c_y\neq 0$, then $c_{y-\epsilon b_w w}\neq 0$ for some $\epsilon\in [r]$. Consider the identity 
$$0=\prod_{v'\in V'}(1-g^{v'})^{r}=(1-g^{b_w w})^{r}\cdot\left(\sum_{y}c_y g^{y}\right)=\sum_{y} \left(\sum_{i=0}^{r}\binom{r}{i}(-1)^{i}c_{y-ib_w w}\right)g^{y}.$$
Therefore, $\sum_{i=0}^{r}\binom{r}{i}(-1)^{i}c_{y-ib_w w}=0$, but $c_y\neq 0$. Hence, $c_{y-b_w w}$,$c_{y-2b_w w}$,$\dots,c_{y-rb_w w}$ cannot be all zero.
\end{itemize}
Therefore, by (i) and (ii), there exist $\mu_{v},\mu'_{v}\in [0,r]$ for $v\in V\setminus \{w\}$ and $\epsilon_w\in [r]$ such that 
\begin{equation}\label{eq:z}
    z=\sum_{v\in V\setminus\{w\}} \mu_v b_v v
\end{equation}
 and
 \begin{equation}\label{equ:z-}
     z-\epsilon_w b_w w=\sum_{v\in V\setminus\{w\}} \mu'_v b_v v.
 \end{equation}
 Subtracting (2) from (1) gives $\epsilon_w b_w w=\sum_{v\in V\setminus \{w\}} (\mu_v-\mu'_v)b_v v$, so setting $\epsilon_v=\mu_v-\mu'_v$ for $v\in V\setminus\{w\}$ satisfies the desired properties.
\end{proof}

\begin{proof}[Proof of Lemma \ref{lemma:main}]
Let $x\in \langle V\rangle$, then $x$ can be written as a linear combination of elements of $V$. Among all such linear combinations, let $x=\sum_{v\in V}a'_{v}v$ be such that the number of $v\in V$ with $a_v'\not\in A$ is minimal. Our goal is to show that $a'_v\in A$ for every $v\in V$. Suppose this is not the case, and let $w\in V$ be such that $a_w'\not\in A$. Define the numbers $(b_v)_{v\in V}\in\mathbb{F}_p^{*}$ as follows. 
\begin{itemize}
    \item If $a'_{v}\in A$, then using the fact that $A$ is $r$-arithmetic, we can choose $b_{v}\in\mathbb{F}_p^{*}$ such that $a'_v+ib_v\in A$ for every $i\in [-r,r]$.
    \item If $a'_{v}\not\in A$ and $v\neq w$, choose $b_v$ arbitrarily. 
    \item Finally, choose $b_w$ such that $a'_w+ib_w\in A$ for every $i\in [r]$.
\end{itemize}
By Lemma \ref{lemma:choice}, there exists $(\epsilon_{v})_{v\in V}\in [-r,r]^{V\setminus \{w\}}$ and $\epsilon_w\in [r]$ such that $$\epsilon_w b_w w=\sum_{v\in V\setminus\{w\}} \epsilon_v b_{v} v.$$ Using this, we can write
$$x=(a'_w+\epsilon_w b_w)w+\sum_{v\in V\setminus\{w\}}(a'_v-\epsilon_v b_v)v.$$
Note that this contradicts the minimality of $(a'_v)_{v\in V}$. Indeed, setting $a''_w=a'_w+\epsilon_w b_w$ and $a''_v=a'_v-\epsilon_v b_v$ for $v\in V\setminus\{w\}$, the sequence $(a''_v)_{v\in V}$ contains strictly less elements not in $A$.
\end{proof}

As Lemma \ref{lemma:olson} tells us, if $V\subset \mathbb{F}_{p}^{n}$ is a multiset such that $|V|\geq (p-1)n+1$, then $V$ is $\mathbb{F}_p$-vanishing. Can we say something about the converse, that is, is there a lower bound on $|V|$ if $V$ is $\mathbb{F}_p$-vanishing? Note that if $V$ contains $p$ copies of the same vector, or more generally, if $V$ contains $(p-1)d+1$ vectors which span a space of dimension $d$, then $V$ is also $\mathbb{F}_p$-vanishing. We can show a weak converse of this, that is, if $V$ is $\mathbb{F}_p$-vanishing, then some subspace of $\mathbb{F}_{p}^{n}$ must contain too many elements of $V$.

\begin{lemma}\label{lemma:subspace}
Let $s$ be the size of the smallest arithmetic set of $\mathbb{F}_p$, let $R\in\{\mathbb{C},\mathbb{F}_p\}$, and let $V\subset \mathbb{F}_p^{n}$ be an $R$-irredundant multiset. Then 
$$|V|\geq \frac{\log p}{\log s} \dim\langle V\rangle.$$
\end{lemma}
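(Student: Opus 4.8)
The plan is to reduce the general $R$-irredundant case to the $\mathbb{F}_p$-vanishing case by replacing each vector $v\in V$ by a basis of $\langle V\rangle$ it belongs to, then use Lemma~\ref{lemma:olson} together with the dimension-halving trick coming from arithmetic sets. Concretely, set $d=\dim\langle V\rangle$ and $s$ the size of a smallest arithmetic set $A\subset \mathbb{F}_p$ (so $s=\Theta(\log p)$, and in fact $s\le 1+\log_2 p$ by Browkin--Divi\v s--Schinzel). I will argue by induction on $d$, the claim being that any $R$-irredundant multiset $V$ with $\dim\langle V\rangle = d$ has $|V|\ge (\log p/\log s)\, d$, equivalently $s^{|V|/d}\ge p$, equivalently $|V|\ge d\log_s p$.

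First I would extract the right structural statement from Lemma~\ref{lemma:main}. Fix an arithmetic set $A$ of size $s$. By Lemma~\ref{lemma:main} (applied with $r=1$), for \emph{every} $x\in \langle V\rangle$ there is a choice $(a_v)_{v\in V}\in A^V$ with $x=\sum_{v\in V}a_v v$. Hence the map $A^V\to \langle V\rangle$, $(a_v)\mapsto \sum a_v v$, is surjective; counting gives $s^{|V|}\ge |\langle V\rangle| = p^{d}$, which is exactly $|V|\ge d\log_s p=(\log p/\log s)d$. So in fact the lemma follows in one line from Lemma~\ref{lemma:main} and the Browkin--Divi\v s--Schinzel bound $s\le 1+\log_2 p$ (the $\log p/\log s$ form of the conclusion is just a clean way to state $s^{|V|}\ge p^{d}$, since $\log p/\log s = \log_s p$).

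The only point needing care is the passage from "$R$-irredundant'' ($R\in\{\mathbb{C},\mathbb{F}_p\}$) to the hypothesis of Lemma~\ref{lemma:main}, but that lemma is already stated for both $R=\mathbb{F}_p$ and $R=\mathbb{C}$, with $r$-arithmetic sets, and for $r=1$ an arithmetic set is $1$-arithmetic by definition; so no extra work is required there. I should also double check that an arithmetic set of size $s$ exists at all for the relevant $p$ and that $s\ge 2$ so $\log s>0$ — this is covered by Straus's theorem quoted in the excerpt. One cosmetic subtlety: the statement writes the bound as $\frac{\log p}{\log s}\dim\langle V\rangle$ with unspecified base of $\log$; since the ratio $\log p/\log s=\log_s p$ is base-independent, any fixed base works, and I will just note this.

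I do not expect a genuine obstacle here: essentially all the difficulty has been front-loaded into Lemma~\ref{lemma:main}. If anything, the "hard part'' is purely bookkeeping — making sure the counting argument $s^{|V|}\ge p^{\dim\langle V\rangle}$ is stated cleanly and that one invokes the existence of a size-$s$ arithmetic set rather than a size-$\Theta(\log p)$ one, so that the constant in $\log p/\log s$ is exactly right rather than merely up to a multiplicative constant. A one-paragraph proof should suffice.
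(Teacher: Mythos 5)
Your actual argument --- apply Lemma~\ref{lemma:main} with an arithmetic set $A$ of size $s$ to conclude that $(a_v)_{v\in V}\mapsto\sum_{v\in V}a_v v$ maps $A^V$ onto $\langle V\rangle$, then count to get $s^{|V|}\ge p^{\dim\langle V\rangle}$ --- is exactly the paper's proof. The opening plan (reduce to the $\mathbb{F}_p$-vanishing case by swapping in bases, induct on $d$, invoke Lemma~\ref{lemma:olson}) is a red herring you never carry out, and the appeal to Browkin--Divi\v{s}--Schinzel is both misattributed (that result is the \emph{lower} bound $s\ge 1+\log_2 p$) and unnecessary here, since the statement is phrased purely in terms of $s$.
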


\begin{proof}
Let $A\subset \mathbb{F}_{p}$ be an arithmetic set of size $s$, and let $d=\dim\langle V\rangle$. By Lemma \ref{lemma:main}, every $x\in \langle V\rangle$ can be written as $x=\sum_{v\in V}\alpha_v v$ with suitable $\alpha_v\in A$ for $v\in V$. But note that $\langle V\rangle$ contains $p^{d}$ elements, while the number of linear combinations $\sum_{v\in V}\alpha_v v$ satisfying $\alpha_v\in A$ is $s^{|V|}$. Therefore, we get $s^{|V|}\geq p^{d}$, finishing the proof.
\end{proof}

Finally, we have the following results about the sizes of arithmetic sets. While the aforementioned result of Nedev \cite{N09} implies the existence of arithmetic sets of size $(1+o(1))\log_2 p$, we are also interested in small values of $p$, so we will rely on an exact result instead. Indeed, it is crucial that for $p\geq 5$, $\mathbb{F}_p$ contains arithmetic sets of size strictly less than $p$.

\begin{lemma}\label{lemma:arithmetic}
Let $p\geq 5$ be a prime.
\begin{itemize}
    \item[(i)] \emph{(\cite{NP21})} There exists an arithmetic set $A\subset\mathbb{F}_p$ of size $2\lfloor \log_2 p\rfloor$.
    
    \item[(ii)] $A=\mathbb{F}_p^{*}$ is $(p-3)/2$-arithmetic.
\end{itemize}
\end{lemma}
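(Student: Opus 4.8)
The plan is to handle the two parts separately, since part (i) is quoted from \cite{NP21} and needs only a pointer, while part (ii) is a direct verification against the definition of an $r$-arithmetic set with $r=(p-3)/2$ and $A=\mathbb{F}_p^*$. For part (i), I would simply recall the explicit construction from \cite{NP21}: take $A$ to be the union of a geometric-type progression and its negative (or whatever the construction there is) so that every element sits in the middle of a $3$-term arithmetic progression inside $A$, and cite that the size is $2\lfloor \log_2 p\rfloor$; no new argument is needed.

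For part (ii) I need to check both defining conditions of $r$-arithmeticity for $A=\mathbb{F}_p^*$ and $r=(p-3)/2$. The second condition is trivial: the only element of $\mathbb{F}_p\setminus A$ is $0$, and I must produce $b\in A$ with $ib\in A$ for all $i\in[r]=\{1,\dots,(p-3)/2\}$; taking $b=1$ works, since $i\in\{1,\dots,(p-3)/2\}$ is never $\equiv 0\pmod p$. The first condition is the substantive one: for each $a\in\mathbb{F}_p^*$ I must find $b\in\mathbb{F}_p^*$ such that $a+ib\neq 0$ for all $i\in[-r,r]$, i.e. $a+ib$ avoids $0$ as $i$ ranges over an interval of $2r+1=p-2$ consecutive integers. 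Equivalently, writing $i\equiv -a/b \pmod p$ is forbidden, I need $-a/b \notin [-r,r] \pmod p$; since $[-r,r]$ has size $p-2$, its complement in $\mathbb{F}_p$ has size $2$, namely $\{r+1, -(r+1)\}=\{(p-1)/2, (p+1)/2\}$ (these are $\pm(p-1)/2$). So it suffices to choose $b$ with $-a/b \equiv (p-1)/2 \pmod p$, that is $b = -2a/(p-1) = 2a \pmod p$ (using $-1/(p-1)\equiv 1$... let me just say: pick $b$ so that $a/b$ equals the prescribed nonzero value, which is possible since $a\neq 0$), and then $b\in\mathbb{F}_p^*$ as required.

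I expect no real obstacle here — the only thing to be careful about is the arithmetic with $r=(p-3)/2$: confirming that $[-r,r]$ modulo $p$ is exactly $\mathbb{F}_p$ minus the two-element set $\{\pm(p-1)/2\}$, and that $2r+1 = p-2 < p$ so this interval genuinely misses something, which requires $p\geq 5$ (for $p=3$ one would have $r=0$ and the statement degenerates). I would present part (ii) as the short computation above, explicitly exhibiting the witness $b$ as a function of $a$, and note in passing that the bound $r=(p-3)/2$ is essentially forced: with $A=\mathbb{F}_p^*$ one cannot avoid $0$ over an interval of length $p-1$ or more, so this is the largest $r$ for which $\mathbb{F}_p^*$ is $r$-arithmetic.
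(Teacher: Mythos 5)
Your proposal is correct and follows essentially the same route as the paper: part (i) is a citation to \cite{NP21} in both, and for part (ii) you arrive at exactly the paper's witnesses ($b=1$ when $a=0$, $b=2a$ when $a\neq 0$). The only difference is that the paper simply asserts ``These choices suffice,'' whereas you derive $b=2a$ by observing that $[-r,r]$ misses exactly $\pm(p-1)/2$ modulo $p$ --- a helpful elaboration of the same computation.
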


\begin{proof}
 We only prove (ii). Let $a\in\mathbb{F}_p$, then it is enough to show that there exists $b\in \mathbb{F}^*_p$ such that $a+ib\neq 0$ if $i\in [-(p-3)/2,(p-3)/2]$, $i\neq 0$. If $a=0$, take $b=1$, and if $a\neq 0$, let $b=2a$. These choices suffice.
\end{proof}

\section{Additive bases}\label{sect:additive}

Let us start with the proof of Theorem \ref{thm:main}. In particular, we prove the following slightly more general result, which then almost immediately implies it.

\begin{theorem}\label{thm:AB}
Let $p$ be a prime, $r\in [p-1]$, and let $A\subset \mathbb{F}_p$ be an $r$-arithmetic set. If $B\subset \mathbb{F}_p^{n}$ is the union of at least $p/r$ bases, then every $w\in\mathbb{F}_p^{n}$ can be written as $w=\sum_{v\in B}\alpha_v v$, where $\alpha_v\in A$ for every $v\in B$.
\end{theorem}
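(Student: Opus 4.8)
The plan is to combine Lemma~\ref{lemma:main} with Corollary~\ref{cor:olson} after the following reduction: it suffices to produce an $(r,\mathbb{F}_p)$-irredundant sub-multiset $V\subseteq B$ that still spans $\mathbb{F}_p^n$. Indeed, once we have such a $V$, Lemma~\ref{lemma:main} (applied with $R=\mathbb{F}_p$ and the given $r$-arithmetic set $A$) tells us that every $w\in\langle V\rangle=\mathbb{F}_p^n$ can be written as $w=\sum_{v\in V}a_v v$ with $a_v\in A$. To extend this to a combination over all of $B$, we need to also express $0$ using the leftover vectors $B\setminus V$ with coefficients in $A$; the cleanest way is to add an extra ingredient to the argument — see the last paragraph.

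First I would show that $B$ itself is $(r,\mathbb{F}_p)$-vanishing. Write $B$ as the union of bases $B_1,\dots,B_m$ with $m\geq p/r$, so $|B|=mn\geq pn/r > ((p-1)n+1)/r$ for $n\geq 1$; by Corollary~\ref{cor:olson}, $B$ is $(r,\mathbb{F}_p)$-vanishing. Next, pass to an $(r,\mathbb{F}_p)$-irredundant sub-multiset: starting from $B$, repeatedly remove an element whose removal keeps the product $\prod_{v}(1-g^v)^r$ equal to zero, until no further element can be removed; call the resulting multiset $V$. By construction $V$ is $(r,\mathbb{F}_p)$-irredundant. The key point to verify is that $V$ still spans $\mathbb{F}_p^n$, i.e.\ $\dim\langle V\rangle=n$. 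This should follow from the structure of the product: if $V$ were contained in a proper subspace $W=\langle V\rangle$ of dimension $d<n$, then since $V$ is $R$-irredundant, Lemma~\ref{lemma:subspace} forces $|V|\geq \frac{\log p}{\log s}\,d$ where $s$ is the smallest arithmetic set size — but this is only a lower bound, not a contradiction by itself, so I instead argue directly: if $\langle V\rangle = W \subsetneq \mathbb{F}_p^n$, extend to a basis and observe that $\prod_{v\in V}(1-g^v)^r$, as an element of $\mathbb{F}_p[W]\subset \mathbb{F}_p[\mathbb{F}_p^n]$, being zero means $V$ is already $(r,\mathbb{F}_p)$-vanishing inside $W$; but $B\setminus V$ contains at least one full basis's worth minus a bounded deficiency, and in particular spans $\mathbb{F}_p^n$, which I'd use to derive that some vector outside $W$ could have been removed instead — this needs care.

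The main obstacle, as flagged, is twofold: (a) showing the irredundant core $V$ spans the whole space (rather than getting trapped in a proper subspace), and (b) handling the coefficients of the vectors in $B\setminus V$, which Lemma~\ref{lemma:main} says nothing about. For (b), the right fix is to not discard $B\setminus V$ at the start: instead, first peel off a single spanning basis $B_1\subseteq B$ (which contributes $n$ linearly independent vectors), note $B' = B\setminus B_1$ is the union of $m-1\geq p/r - 1$ bases. Better yet, I expect the intended argument processes $B$ in one shot: since $B$ is $(r,\mathbb{F}_p)$-vanishing but we want \emph{every} element of $B$ to receive a coefficient in $A$, one applies Lemma~\ref{lemma:main} not to a sub-multiset but uses irredundancy to peel coefficients. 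Concretely, reverse the roles: given a target $w$, first pick arbitrary $\alpha_v\in A$ for all $v\in B\setminus V$, let $w' = w - \sum_{v\in B\setminus V}\alpha_v v \in \mathbb{F}_p^n = \langle V\rangle$ (using that $V$ spans), then apply Lemma~\ref{lemma:main} to write $w' = \sum_{v\in V}\alpha_v v$ with $\alpha_v\in A$; concatenating gives $w=\sum_{v\in B}\alpha_v v$ with all coefficients in $A$. So the entire difficulty collapses to proving $\dim\langle V\rangle = n$ for the irredundant core. For that I would argue: suppose not, so $\langle V\rangle$ has codimension $\geq 1$; pick $u\in B\setminus V$ with $u\notin\langle V\rangle$ (possible since $B$ spans and $B\setminus V\supseteq$ most of a basis) — then $V\cup\{u\}$ has $(1-g^u)^r$ times a zero product equal to zero, so $V\cup\{u\}$ is $(r,\mathbb{F}_p)$-vanishing, and since $V$ is irredundant, $V\cup\{u\}$ is $(r,\mathbb{F}_p)$-irredundant (no proper subset vanishes: a proper subset either misses $u$, hence is a subset of $V$ and doesn't vanish, or contains $u$ — here I'd use that $(1-g^u)^r$ is a non-zero-divisor modulo the ideal, or more elementarily compare supports). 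Then Lemma~\ref{lemma:main} applied to $V\cup\{u\}$ writes $u$ itself with coefficients in $A$ over $V\cup\{u\}$, i.e.\ $(1-a_u)u \in \langle V\rangle$; choosing $u$'s coefficient-witness carefully (the $r$-arithmetic property gives flexibility) yields $a_u\neq 1$, forcing $u\in\langle V\rangle$, a contradiction. I expect step (a) to require the most bookkeeping, particularly the claim that the irredundant core together with one outside vector stays irredundant.
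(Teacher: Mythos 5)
Your reduction to Lemma~\ref{lemma:main} is on the right track, and your observation for part (b) — that once $V$ spans one can assign arbitrary $A$-coefficients to $B\setminus V$ and solve the remaining target over $V$ — is correct. But the step you yourself flag as needing "the most bookkeeping," namely $\dim\langle V\rangle = n$, is genuinely false, not merely unproved. Take $p=5$, $r=1$, $n=2$ and $B$ equal to $5$ copies of the basis $\{e_1,e_2\}$. The sub-multiset $V$ consisting of the five copies of $e_1$ satisfies $(1-g^{e_1})^5 = 1-g^{5e_1}=0$ and no proper subset of it vanishes, so $V$ is $\mathbb{F}_5$-irredundant; yet $\langle V\rangle$ is the line spanned by $e_1$. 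So an irredundant core can certainly sit inside a proper subspace, and your proof cannot recover from that.

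Your attempted repair compounds the problem. Starting from an irredundant $V$ with $\langle V\rangle \subsetneq \mathbb{F}_p^n$ and adjoining $u\notin\langle V\rangle$, you assert that $V\cup\{u\}$ is $(r,\mathbb{F}_p)$-irredundant because "a proper subset either misses $u$, hence is a subset of $V$ and doesn't vanish." But $V$ itself is a proper subset of $V\cup\{u\}$ that misses $u$, and $V$ \emph{does} vanish (that is exactly what being $(r,\mathbb{F}_p)$-irredundant entails). So $V\cup\{u\}$ is never irredundant, and the subsequent appeal to Lemma~\ref{lemma:main} is unavailable.

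The missing idea is to not fight the possibility $\langle V\rangle\subsetneq\mathbb{F}_p^n$ but exploit it: induct on $n$. Let $T=\langle V\rangle$ and $S=\mathbb{F}_p^n/T$. Project $B\setminus V$ into $S$; each of the original bases contributes a spanning set for $S$ after projection, so the projected multiset is again the union of at least $p/r$ bases of $S$. The induction hypothesis writes the $S$-component of the target with $A$-coefficients using $B\setminus V$, and Lemma~\ref{lemma:main} then cleans up the remaining $T$-component using $V$. This sidesteps entirely the (false) requirement that $V$ span, which is the crux your proposal gets stuck on.
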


\begin{proof}
We proceed by induction on $n$. In the base case $n=0$ there is nothing to prove, so suppose that $n\geq 1$. Let $B\subset\mathbb{F}_p^{n}$ be such that $B$ is the union of at least $p/r$ bases, then $|B|\geq pn/r$. Hence, by Corollary \ref{cor:olson}, $B$ is $(r,\mathbb{F}_p)$-vanishing. But then $B$ contains an $(r,\mathbb{F}_p)$-irredundant subset $V$.

Let $T=\langle V\rangle$, and $S=\mathbb{F}_p^{n}/T\cong \mathbb{F}_p^{n-\dim(T)}$. Then every $x\in \mathbb{F}_p^{n}$ can be written as $x=(x_S,x_T)$ with $x_S\in S$ and $x_T\in T$. Let $B'=\{b_S:b\in B\setminus V\}\subset S$. Note that if $C$ is a basis in $B$, then $C'=\{b_S:b\in C\setminus V\}$ contains a basis in $B'$, so $B'$ contains the union of $p/r$ bases of $S$. Therefore, by our induction hypothesis applied to $S$, for every $x\in\mathbb{F}_p^n$, we can write 
$$x_S=\sum_{v\in B\setminus V}\alpha_{v}v_S$$
with suitable $\alpha_v\in A$, $v\in B\setminus V$. Also, by Lemma \ref{lemma:main} applied to $x'=x_T-\sum_{v\in B\setminus V}\alpha_{v}v_T\in T$, there exists $\alpha_v\in A$ for every $v\in V$ such that 
$$x_T-\sum_{v\in B\setminus V}\alpha_{v}v_T=\sum_{v\in V}\alpha_v v_T.$$
But then $x=\sum_{v\in B}\alpha_v v$, finishing the proof.
\end{proof}

\begin{proof}[Proof of Theorem \ref{thm:main}]
 In order to prove (i), apply Theorem \ref{thm:AB} with $r=1$ and an arithmetic set $A\subset \mathbb{F}_p$ of size $2\lfloor \log_2 p\rfloor$, whose existence is guaranteed by Lemma \ref{lemma:arithmetic}. To prove (ii), apply Theorem~\ref{thm:AB} with $r=(p-3)/2$ and $A=\mathbb{F}_p^{*}$, which is $(p-3)/2$-arithmetic by Lemma \ref{lemma:arithmetic}. Note that $p/r< 3$ if $p\geq 11$.
\end{proof}

\section{Coset covers}\label{sect:coset}

In this section, we prove Theorem \ref{thm:coset}. For a group $G$, let $\phi(G)$ denote the smallest $k$ for which there exists an irredundant coset cover $\{H_ix_i:i\in [k]\}$ such that $\bigcap_{i\in [k]} H_{i}$ is trivial. Note that Theorem \ref{thm:coset} is equivalent with the statement that for every finite abelian group $A$, we have $\phi(A)=\Omega(\log |A|/\log\log\log|A|)$. In particular, we prove a slightly stronger result.

\begin{theorem}\label{thm:coset2}
There exists an absolute constant $c>0$ such that the following holds. Let $A$ be a finite abelian group and let $p_1^{n_1}\dots p_m^{n_m}$ be the prime factorization of $|A|$. Then 
$$\phi(A)\geq \sum_{i\in [m]} cn_{i}\frac{\log p_i}{\log\log (p_i+1)}.$$
\end{theorem}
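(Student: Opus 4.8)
The plan is to reduce the abelian statement to the elementary $p$-group case and then prove a codimension lower bound there. First I would observe that if $\{H_ix_i:i\in[k]\}$ is an irredundant coset cover of a finite abelian group $A$ with $\bigcap H_i$ trivial, and $q$ is any prime dividing $|A|$, one can push the cover down to the elementary abelian quotient: let $A_q = A/(\Phi_q)$ where $\Phi_q$ is the subgroup generated by all elements whose order is coprime to $q$ together with $q\cdot A$ — so $A_q\cong \mathbb{F}_q^{n_q}$ with $n_q$ equal to the number of cyclic factors of $A$ divisible by $q$ (in particular $q^{n_q}$ divides $|A|$, and $n_q\log q \le n_i\log p_i$ when $q=p_i$). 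The images of the $H_ix_i$ cover $A_q$, and after discarding those that become the whole group and passing to an irredundant subcover, we obtain an irredundant coset cover of $\mathbb{F}_q^{n_q}$ using at most $k$ cosets; the intersection of the images is trivial in $\mathbb{F}_q^{n_q}$ precisely because $\bigcap H_i$ was trivial in $A$ and contained the kernel of the projection composed appropriately — this last point needs a short argument, possibly choosing the right quotient for each prime independently. Summing the resulting bounds $\phi(\mathbb{F}_{p_i}^{n_i})\ge c n_i \log p_i/\log\log(p_i+1)$ over $i$, after checking the contributions add (the same set of $k$ cosets is reused for each prime, so one must argue the bound is additive — e.g. partition the cosets $H_i$ according to which prime $p_i$ divides $[A:H_i]$, or use that codimensions in coprime quotients are ``independent''), gives Theorem~\ref{thm:coset2}.

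The core is therefore: if $\{H_ix_i:i\in[k]\}$ is an irredundant coset cover of $V=\mathbb{F}_p^n$ with $\bigcap_i H_i = \{0\}$, then $k \ge cn\log p/\log\log(p+1)$. Here I would invoke the group-ring machinery from Section~\ref{sect:groupring}. A coset $H_ix_i$ in $V$ is the zero set of a character-type function: write $H_i$ as the kernel of some surjection $V\to \mathbb{F}_p^{d_i}$, so $x\in H_ix_i$ iff $\langle \chi, x\rangle$ lies in a fixed affine subspace; more concretely, the indicator of the complement of $H_ix_i$ factors through $1 - \lambda^{t_i}g^{v_i}$-type expressions in $\mathbb{C}[\mathbb{F}_p^n]$. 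The standard translation (going back to Szegedy's equivalence) is: the fact that the cosets cover $V$ means a certain product $\prod_{i}(\text{something vanishing off }H_ix_i)$ evaluates to zero, and irredundancy of the cover translates into irredundancy of this product — i.e. after selecting one linear functional $v_i$ per coset (and a shift $t_i$), the multiset $\{v_i : i\in[k]\}$ is $\mathbb{C}$-irredundant (or $(r,\mathbb{C})$-irredundant for a suitable $r$ tied to the codimensions, if $H_i$ has codimension $>1$; I'd first handle codimension-one cosets and then reduce the general case to it by splitting a codimension-$d$ coset, or by using the $r$-arithmetic generalization). Moreover $\bigcap_i H_i=\{0\}$ forces $\langle v_1,\dots,v_k\rangle = V^*$, so $\dim\langle\{v_i\}\rangle = n$.

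Now Lemma~\ref{lemma:subspace} does the work directly: for the $\mathbb{C}$-irredundant multiset $V=\{v_i\}$ of size $k$ with $\dim\langle V\rangle = n$, we get $k \ge \frac{\log p}{\log s}\, n$, where $s$ is the minimum size of an arithmetic set in $\mathbb{F}_p$. By the discussion preceding Lemma~\ref{lemma:main} (Straus, Browkin–Diviš–Schinzel, Nedev), $s = \Theta(\log p)$, and crucially $s \le (1+o(1))\log_2 p \le C\log(p+1)$, so $\log s \le \log\log(p+1) + O(1) \le c'^{-1}\log\log(p+1)$ for an absolute constant (here I'd be slightly careful for tiny primes: for $p=2,3$ there is no arithmetic set strictly smaller than relevant, but then $n_i\log p_i/\log\log(p_i+1)$ is bounded and can be absorbed into the constant $c$, or handled by noting $\phi\ge 1$ always). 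This yields $k\ge c n\log p/\log\log(p+1)$ as desired.

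The main obstacle I anticipate is the translation step — turning ``irredundant coset cover with trivial intersection'' into ``$(r,\mathbb{C})$-irredundant multiset of linear functionals spanning $V^*$'' — and in particular handling cosets $H_i$ of codimension larger than one. For a codimension-$d_i$ coset one wants to replace it by something accounting for all $d_i$ ``directions,'' and the natural move is to thread it through the parameter $r$ or to split it into $d_i$ codimension-one conditions; but splitting can destroy irredundancy, so the right bookkeeping is to observe that a codimension-$d$ coset $H_ix_i$ is covered (as a complement) by a product of $d$ factors and that in the product over all $i$, the total count that matters is $\sum_i d_i \ge n$ while the number of \emph{cosets} is $k$ — and irredundancy of the cover should give that the associated multiset of size $\sum_i d_i$ (or a cleverly chosen sub-structure) is irredundant in the group-ring sense. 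Getting this correspondence exactly right, matching the irredundancy notions and ensuring the spanning condition, is where the care is needed; the rest is a direct application of Lemmas~\ref{lemma:main} and \ref{lemma:subspace} together with the known bounds on arithmetic sets.
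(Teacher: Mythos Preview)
Your proposal has a genuine gap in the reduction from general abelian groups to the elementary case, and it fails already for cyclic $p$-groups. Take $A=\mathbb{Z}/p^{n}$. In the statement, the prime factorization gives exponent $n_1=n$, so the target bound is $\phi(A)\geq cn\log p/\log\log(p+1)$. But your quotient $A_q=A/\Phi_q$ is $\mathbb{F}_p$, of dimension $n_q=1$ (the $p$-rank), not $n$. You note yourself that $n_q\log q\le n_i\log p_i$, but this inequality points the wrong way: pushing the cover down to $\mathbb{F}_p$ only yields $k\geq c\cdot 1\cdot\log p/\log\log(p+1)$, losing the factor $n$ entirely. The exponent information is simply invisible in the elementary abelian quotient, so no amount of bookkeeping with that quotient alone can recover the full bound.

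There is a second gap in the additivity step. Projecting the same $k$ cosets to each $A_{p_i}$ gives bounds on the same $k$, so a priori you get a maximum, not a sum. Your suggested fix---partitioning cosets by which prime divides $[A:H_i]$---is ill-defined since $[A:H_i]$ can have several prime factors; and even after replacing each $H_i$ by a maximal supergroup (which does have prime index), passing to an irredundant subcover may destroy the trivial-intersection condition, so Claim~\ref{claim:subcover} does not directly rescue you. The paper confronts exactly these two obstacles and resolves them not by a single quotient but by a double induction (Lemma~\ref{lemma:coset}): one replaces a single non-maximal $H_k$ by a maximal $H_k'\supset H_k$, and if this breaks irredundancy, the resulting shorter irredundant cover has nontrivial intersection $B$, to which one applies the outer induction on $|A|$; an iterated peeling argument then accounts for the remaining cosets. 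Your treatment of the base case $A=\mathbb{F}_p^{n}$ with hyperplane cosets (via $\mathbb{C}$-irredundancy and Lemma~\ref{lemma:subspace}) is essentially correct and matches Lemma~\ref{thm:hyperplane2}; what is missing is the inductive mechanism that transfers this to arbitrary abelian $A$.
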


We prepare the proof of this theorem with several statements. First, we translate Lemma \ref{lemma:subspace} into a statement about hyperplane covers of the elementary $p$-group $\mathbb{F}_p^{n}$. With slight abuse of notation, we consider $\mathbb{F}_p^{n}$ as an additive group.

\begin{lemma}\label{thm:hyperplane2}
Let $s$ be the size of the smallest arithmetic set of $\mathbb{F}_p$, and let $H_{1},\dots,H_{k}\subset \mathbb{F}_p^{n}$ be linear hyperplanes such that some translates $H_1+z_1,\dots,H_{k}+z_k$ form an irredundant cover of $\mathbb{F}_p^{n}$. Then the codimension of $\bigcap_{i=1}^{k}H_{i}$ is at most $k\log s/\log p$.
\end{lemma}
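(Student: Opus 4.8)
The plan is to deduce Lemma~\ref{thm:hyperplane2} directly from Lemma~\ref{lemma:subspace} by translating the hyperplane-cover hypothesis into the group-ring language of $\mathbb{F}_p$-irredundancy. Each linear hyperplane $H_i\subset\mathbb{F}_p^n$ is the kernel of a nonzero linear functional, equivalently $H_i=\{x:\langle v_i,x\rangle=0\}$ for some nonzero $v_i\in\mathbb{F}_p^n$ (viewing $\mathbb{F}_p^n$ via a fixed identification with its dual), and the translate $H_i+z_i$ is the affine subspace $\{x:\langle v_i,x\rangle=c_i\}$ for the appropriate constant $c_i=\langle v_i,z_i\rangle$. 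So first I would recast the covering condition: the translates $H_1+z_1,\dots,H_k+z_k$ fail to cover a point $x$ precisely when $\langle v_i,x\rangle\neq c_i$ for all $i$. The key identity is that the indicator of the ``not-covered'' set, as an element of $\mathbb{C}[\mathbb{F}_p^n]$ (or working coordinate-wise on the dual side), is governed by the product $\prod_{i\in[k]}(1-\lambda^{t_i} g^{v_i})$ for a suitable choice of phases $t_i\in\mathbb{F}_p$ determined by the $c_i$: evaluating this product's Fourier transform at a character $\chi_x$ gives $\prod_i(1-\lambda^{t_i+\langle v_i,x\rangle})$, which is nonzero exactly when no factor vanishes, i.e.\ when $x$ is uncovered. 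Hence the translates cover $\mathbb{F}_p^n$ iff $\prod_{i}(1-\lambda^{t_i}g^{v_i})=0$ in $\mathbb{C}[\mathbb{F}_p^n]$, i.e.\ iff $V=\{v_1,\dots,v_k\}$ (as a multiset) is $\mathbb{C}$-vanishing with the phase choice $(t_i)$.

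Second, I would match up irredundancy. Removing the $i$-th translate from the cover still covers $\mathbb{F}_p^n$ iff $\prod_{j\neq i}(1-\lambda^{t_j}g^{v_j})=0$; so the cover $\{H_i+z_i\}$ being irredundant is exactly the statement that $V$, with the phases $(t_i)$, is $\mathbb{C}$-irredundant in the sense of the Definition (no proper subset gives a vanishing product with the same phases). Third, I need to relate $\operatorname{codim}\bigcap_{i} H_i$ to $\dim\langle V\rangle$: since $H_i=v_i^\perp$, we have $\bigcap_i H_i=\langle V\rangle^\perp$, and therefore $\operatorname{codim}\bigcap_i H_i=\dim\langle V\rangle$. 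Now apply Lemma~\ref{lemma:subspace} with $R=\mathbb{C}$: it gives $k=|V|\geq \frac{\log p}{\log s}\dim\langle V\rangle=\frac{\log p}{\log s}\operatorname{codim}\bigcap_i H_i$, which rearranges to $\operatorname{codim}\bigcap_i H_i\leq k\log s/\log p$, as desired.

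One technical point to handle carefully: $V$ is a multiset, and distinct hyperplanes $H_i$ need not give distinct functionals $v_i$ only up to scaling — but two translates $v_i^\perp+z_i$ and $v_j^\perp+z_j$ with $v_j=\alpha v_i$ define parallel affine hyperplanes, and in an irredundant cover one does not expect the multiset to collapse in a way that breaks the argument; in any case Lemma~\ref{lemma:subspace} and Lemma~\ref{lemma:minimal} are stated for multisets and are insensitive to rescaling individual vectors, so this causes no problem. The only genuinely delicate step is verifying the Fourier-theoretic dictionary between affine hyperplane covers and the vanishing of the group-ring product $\prod(1-\lambda^{t_i}g^{v_i})$ with the correct bookkeeping of the translation constants $c_i$ versus the phases $t_i$ — essentially checking that $c_i\mapsto t_i$ is the intended correspondence and that the ``uncovered set is empty'' condition is literally ``every Fourier coefficient of the product is zero.'' I expect this dictionary to be the main obstacle in writing the proof cleanly, but it is a routine computation once set up: expand $\prod_i(1-\lambda^{t_i}g^{v_i})=\sum_y c_y g^y$ and observe $c_y=\sum$ over sign patterns, or more slickly pass to the dual group where the product becomes pointwise multiplication of the functions $x\mapsto 1-\lambda^{t_i+\langle v_i,x\rangle}$.
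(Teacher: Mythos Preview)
Your proposal is correct and follows essentially the same route as the paper: choose normals $v_i$ to the $H_i$, use the Fourier dictionary $F(h_{v,t}^*)(x)=1-\lambda^{t+\langle v,x\rangle}$ to see that $\prod_i(1-\lambda^{t_i}g^{v_i})=0$ iff the corresponding translates cover $\mathbb{F}_p^n$, deduce that $V=\{v_i\}$ is $\mathbb{C}$-irredundant, and then invoke Lemma~\ref{lemma:subspace} together with $\operatorname{codim}\bigcap_i H_i=\dim\langle V\rangle$. The only cosmetic difference is that the paper writes out the Fourier transform explicitly rather than speaking of ``evaluating at the character $\chi_x$,'' and it takes $t_i$ as the primary data (so the covered translate is $\{x:\langle x,v_i\rangle=-t_i\}$), which is exactly the bookkeeping you flag as the delicate step.
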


\begin{proof} For $i=1,\dots,k$, let $v_i\in \mathbb{F}_p^{n}$  such that $H_{i}=\{x\in \mathbb{F}_p^{n}:\langle x,v_i\rangle=0\}$, and let $V=\{v_i:i\in [k]\}$. We show that $V$ is $\mathbb{C}$-irredundant.  Let us recall that if $h=\sum_{v}c_v g^{v}\in \mathbb{C}[\mathbb{F}_p^{n}]$, then $h^{*}:\mathbb{F}_p^{n}\rightarrow \mathbb{C}$ denotes the function defined as $h^{*}(v)=c_v$, and for $h_1,h_2\in\mathbb{C}[\mathbb{F}_p^{n}]$, we have $(h_1\cdot h_2)^{*}=h_1^{*}*h_2^{*}$. Given a function $f:\mathbb{F}_p^{n}\rightarrow \mathbb{C}$, denote its discrete Fourier transform by $F(f)$. That is, $F(f)(x)=\sum_{v}\lambda^{\langle x,v\rangle}f(v)$. The Fourier transform turns convolution into product, so for $h_1,h_2\in \mathbb{C}[\mathbb{F}_p^{n}]$, we have 
$$F((h_1\cdot h_2)^{*})=F(h_1^*)\cdot F(h_2^{*}).$$

For  $t\in\mathbb{F}_p$ and $v\in\mathbb{F}_p^{n}$, let $h_{v,t}=1-\lambda^{t}g^{v}$. Then $F(h^{*}_{v,t})(x)=1-\lambda^{t+\langle x,v\rangle}$. Therefore, the set of points the function $F(h_{v,t}^{*})$ vanishes at is the hyperplane $\{x:\langle x,v\rangle=-t\}$. In general, given $t_1,\dots,t_k\in\mathbb{F}_p$, the set of points the function $(h_{v_1,t_1}\dots h_{v_k,t_k})^*$ vanishes at is the union of the hyperplanes $H_{i}'=\{x\in\mathbb{F}_p^{n}:\langle x,v_i\rangle=-t_i\}$. This shows that $\prod_{i=1}^{k}(1-\lambda^{t_i}g^{v_i})=0$ if and only if $H_1',\dots,H_{k}'$ form a covering of $\mathbb{F}_p^{n}$. From this, we deduce that some translates of $H_1,\dots,H_k$ form an irredundant cover if and only if $V$ is $\mathbb{C}$-irredundant. We finish the proof by citing Lemma \ref{lemma:subspace}: $$\frac{\log s}{\log p}|V|\geq  \dim\langle V\rangle= \mbox{codim}\left(\bigcap_{i=1}^{k}H_{i}\right).$$
\end{proof}

As discussed in the introduction, the previous lemma, combined with Lemma \ref{lemma:arithmetic}, resolves Conjecture 10 of Szegedy \cite{Sz07} for $p\geq 5$ in a strong sense. Note that Lemma \ref{thm:hyperplane2} also confirms Theorem \ref{thm:coset2} in case $A=\mathbb{F}_p^{n}$, assuming we are only allowed maximal cosets. In the rest of this section, our goal is to reduce the general case to this special setup. 

\begin{definition}
 A coset cover $\{H_ix_i:i\in[k]\}$ of an abelian group $A$ is \emph{efficient}, if it is irredundant, $\bigcap_{i\in [k]}H_{i}$ is trivial, and $H_{i}$ is a maximal subgroup of $A$ for $i\in [k]$. 
\end{definition}

\begin{lemma}\label{lemma:fettuccine}
If $A$ has an efficient coset cover, then $A\cong \mathbb{F}_p^{n}$ for some prime $p$ and $n\in\mathbb{Z}^{+}$.
\end{lemma}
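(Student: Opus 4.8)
The plan is to first pin down the coarse structure of $A$ and then rule out the presence of more than one prime. Since each $H_i$ is a maximal subgroup of the abelian group $A$, the quotient $A/H_i$ is a simple abelian group, hence cyclic of prime order $q_i$. Because the cover is efficient, $\bigcap_{i\in[k]}H_i$ is trivial, so the diagonal homomorphism $A\to\prod_{i\in[k]}A/H_i\cong\prod_{i\in[k]}\mathbb{Z}/q_i\mathbb{Z}$ is injective; in particular $A$ is finite and abelian, every prime dividing $|A|$ occurs among the $q_i$ (as $|A|$ divides $\prod_i q_i$), and conversely each $q_i=[A:H_i]$ divides $|A|$. Also $k\ge 1$, since the empty collection cannot cover the nonempty set $A$, and $H_1$ is a proper subgroup, so $A$ is nontrivial.

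The heart of the argument is to show that $|A|$ is a prime power. Suppose not, and write $A=B\times C$ with $|B|,|C|>1$ and $\gcd(|B|,|C|)=1$. For each $i$, the image of $B$ in the prime-order group $A/H_i\cong\mathbb{Z}/q_i\mathbb{Z}$ is a subgroup of order dividing $\gcd(|B|,q_i)$, so if $q_i\nmid|B|$ then $B\subseteq H_i$, and likewise $C\subseteq H_i$ whenever $q_i\nmid|C|$. Since $q_i\mid|A|=|B|\cdot|C|$ and $\gcd(|B|,|C|)=1$, the prime $q_i$ divides exactly one of $|B|,|C|$, so at least one of the inclusions $B\subseteq H_i$, $C\subseteq H_i$ holds; both cannot, since that would force $H_i\supseteq\langle B,C\rangle=A$. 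Let $I=\{i:C\subseteq H_i\}$ and $J=\{i:B\subseteq H_i\}$, so $[k]=I\sqcup J$. Both sets are nonempty: choosing any prime $q\mid|B|$ we have $q=q_j$ for some $j$, and then $q_j\nmid|C|$ forces $C\subseteq H_j$, i.e. $j\in I$; symmetrically $J\neq\emptyset$.

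Next I would exploit the product structure. For $i\in I$ we have $H_i=(H_i\cap B)\times C$, so the coset $H_ix_i$ has the form $B_i\times C$ for a coset $B_i\subseteq B$; in particular membership of $(a_B,a_C)$ in $H_ix_i$ depends only on $a_B$. Symmetrically, for $i\in J$ the coset $H_ix_i$ has the form $B\times C_i$ for a coset $C_i\subseteq C$, and membership depends only on the $C$-coordinate. If $\bigcup_{i\in I}B_i\neq B$, pick $b_0$ outside this union; then no $H_ix_i$ with $i\in I$ meets the fiber $\{b_0\}\times C$, so this fiber is covered entirely by $\{H_ix_i:i\in J\}$, which forces $\bigcup_{i\in J}C_i=C$, and hence $\{H_ix_i:i\in J\}$ already covers all of $A$. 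As $I\neq\emptyset$, this is a proper sub-cover, contradicting irredundancy. If instead $\bigcup_{i\in I}B_i=B$, then $\{H_ix_i:i\in I\}$ covers $A$ while $J\neq\emptyset$, the same contradiction. Therefore $|A|=p^m$ for a single prime $p$; then all $q_i$ equal $p$, so the embedding from the first paragraph gives $A\hookrightarrow(\mathbb{Z}/p\mathbb{Z})^k$, whence $A$ is an $\mathbb{F}_p$-vector space, i.e. $A\cong\mathbb{F}_p^n$ with $n=\dim_{\mathbb{F}_p}A\ge 1$.

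I do not anticipate a serious obstacle here: the only genuinely structural step is the product-cover dichotomy ``either the $B$-projections cover $B$ or the $C$-projections cover $C$'', and the remainder is bookkeeping — checking that $I$ and $J$ are nonempty, and that a maximal subgroup containing $C$ (resp. $B$) really makes its coset a cylinder over the $B$-coordinate (resp. $C$-coordinate). The one elementary point worth stating carefully is that a maximal subgroup of an abelian group has simple, hence prime-cyclic, quotient, which is exactly where the hypothesis that each $H_i$ is \emph{maximal} (rather than an arbitrary subgroup) is used.
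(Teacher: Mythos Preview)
Your argument is correct and follows the paper's two-step plan: first show that $|A|$ is a prime power, then upgrade to elementary abelian. For the first step both proofs exploit that a maximal subgroup of a product with coprime factors must contain one of the factors, so the cover splits into ``$B$-cylinders'' and ``$C$-cylinders''; the paper (working with the full primary decomposition $A=A_1\oplus\cdots\oplus A_m$) then exhibits an explicitly uncovered point $(\alpha_1,\dots,\alpha_m)$, whereas you run the dual dichotomy---either the $I$-family already covers $A$, or else the $J$-family must---violating irredundancy either way. The one substantive difference is the second step: the paper invokes the Frattini subgroup of a $p$-group and the fact that $A/\mathrm{Fr}(A)$ is elementary abelian, while you read off directly from the diagonal embedding $A\hookrightarrow\prod_i A/H_i\cong(\mathbb{Z}/p\mathbb{Z})^k$ that every element has order dividing $p$. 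Your route is more self-contained; the paper's is a one-liner once the Frattini machinery is taken for granted.
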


\begin{proof}
Let $\{H_ix_i:i\in[k]\}$ be an efficient coset cover of $A$. By the fundamental theorem of finite abelian groups, we can write $A=A_1\oplus\dots\oplus A_{m}$, where $|A_{1}|,\dots,|A_{m}|$ are  powers of distinct primes. First, we show that $m=1$. Suppose that $m>1$, and for $i\in [m]$, $a\in A$, let  $\pi_i(a)$ denote the projection of $a$ into $A_{i}$. As $H_{i}$ is a maximal subgroup of $A$, there is a unique $\tau(i)\in [m]$ such that $\pi_{\tau(i)}(H_{i})$ is a maximal subgroup of $A_{\tau(i)}$, and $\pi_{j}(H_{i})=A_{j}$ for $j\in [m]\setminus\{\tau(i)\}$. For $j\in [m]$, let $J_{j}\subset [k]$ be the set of indices $i$ such that $\tau(i)=j$. Note that $J_{j}$ is nonempty for every $j\in [m]$, otherwise $A_{j}<\pi_{j}(\bigcap_{i\in [k]}H_{i})$. Furthermore, $\{H_{i}x_i:i\in J_{j}\}$ does not cover at least one element $a\in A$, so it does not cover any element $b\in A$ with $\pi_{j}(b)=\pi_{j}(a)=:\alpha_j$. But then $\{H_{i}x_i:i\in [k]\}$ does not cover $(\alpha_1,\dots,\alpha_m)$, contradiction.

Now we can assume that $|A|$ is a power of some prime $p$. The intersection of all maximal subgroups of $A$, denoted by $\mbox{Fr}(A)$, is called the Frattini subgroup \cite{F85}. It is known that if $A$ is a $p$-group, then $\mbox{Fr}(A)$ is the smallest normal subgroup $N$ such that $A/N\cong\mathbb{F}_p^{n}$ for some $n$. Note that if $A$ has an efficient coset cover, then $\mbox{Fr}(A)$ is trivial, therefore, $A\cong\mathbb{F}_p^{n}$ for some $n\in\mathbb{Z}^{+}$.
\end{proof}

For a prime $p$ and $n\in\mathbb{Z}^{+}$, define $\phi(p,n)$ to be the size of a minimal sized efficient coset cover of $\mathbb{F}_p^{n}$. Also, define $\lambda_p:=\inf_{n\in \mathbb{Z}^{+}}\frac{\phi(p,n)-1}{n}$. 

\begin{lemma}\label{lemma:gpn}
Let $s$ be the size of the smallest arithmetic set of $\mathbb{F}_p$. Then $\phi(p,n)\geq \frac{\log p}{\log s}n$.
\end{lemma}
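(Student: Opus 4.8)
The plan is to show that any efficient coset cover of $\mathbb{F}_p^n$ yields an $\mathbb{C}$-irredundant (or $\mathbb{F}_p$-irredundant) multiset $V\subset\mathbb{F}_p^n$ whose span has dimension $n$, and then invoke Lemma \ref{lemma:subspace} directly. Let $\{H_ix_i:i\in[k]\}$ be an efficient coset cover of $\mathbb{F}_p^n$ with $k=\phi(p,n)$. Since the cover is efficient, each $H_i$ is a maximal subgroup of $\mathbb{F}_p^n$, hence a linear hyperplane. So write $H_i=\{x:\langle x,v_i\rangle=0\}$ for suitable $v_i\in\mathbb{F}_p^n$, and set $V=\{v_i:i\in[k]\}$ (as a multiset). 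The translate $H_ix_i$ is the affine hyperplane $\{x:\langle x,v_i\rangle=-t_i\}$ for an appropriate $t_i\in\mathbb{F}_p$, so by the Fourier computation carried out in the proof of Lemma \ref{thm:hyperplane2}, the family $\{H_ix_i\}$ covers $\mathbb{F}_p^n$ and is irredundant exactly when $V$ is $\mathbb{C}$-irredundant.

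Next I would check that $\dim\langle V\rangle=n$. This is precisely the condition that $\bigcap_{i\in[k]}H_i=\{x:\langle x,v_i\rangle=0\text{ for all }i\}$ is trivial, which is part of the definition of an efficient cover; equivalently, the $v_i$ span $\mathbb{F}_p^n$. Therefore Lemma \ref{lemma:subspace} applies to the $\mathbb{C}$-irredundant multiset $V$ and gives
$$k=|V|\geq \frac{\log p}{\log s}\,\dim\langle V\rangle=\frac{\log p}{\log s}\,n,$$
which is exactly the claimed bound $\phi(p,n)\geq \frac{\log p}{\log s}n$.

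The only point that needs a little care is the bookkeeping in the translation between the coset cover and the multiset $V$: one must make sure that the correspondence $H_ix_i\leftrightarrow (v_i,t_i)$ is set up so that irredundancy of the cover matches $\mathbb{C}$-irredundancy of $V$ in the sense of the definition (with the same tuple $(t_v)_{v\in V}$), and this has essentially already been done inside the proof of Lemma \ref{thm:hyperplane2}. So this lemma is really just a restatement of Lemma \ref{thm:hyperplane2} using the $\phi$-notation, and no genuinely new obstacle arises; the main thing is to phrase it cleanly.
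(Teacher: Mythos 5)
Your proof is correct and takes essentially the same route as the paper: the paper's own proof simply cites Lemma \ref{thm:hyperplane2} directly, observing that an efficient cover is precisely an irredundant cover by translates of linear hyperplanes whose intersection is trivial (so codimension $n$), which immediately gives $k\log s/\log p\geq n$. Your version just unpacks Lemma \ref{thm:hyperplane2} back into Lemma \ref{lemma:subspace} plus the Fourier correspondence, which is redundant given the former is already available.
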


\begin{proof}
Let $k=\phi(p,n)$. Then there exist $k$ linear hyperplanes $H_1,\dots,H_k\subset \mathbb{F}_p^{n}$ and $k$ vectors $z_1,\dots,z_k\in\mathbb{F}_p^{n}$ such that  $\{H_i+z_i:i\in [k]\}$ is an efficient coset cover of $\mathbb{F}_p^{n}$. This means that $\{H_i+z_i:i\in [k]\}$ is irredundant, and $\bigcap_{i=1}^{k}H_{i}$ is trivial. In other words, the codimension of $\bigcap_{i=1}^{k}H_{i}$ is $n$, which implies $k\log s/\log p\geq n$ by Lemma \ref{thm:hyperplane2}. This finishes the proof.
\end{proof}

\begin{corollary}\label{cor:lambda}
There exists a constant $c>0$ such that for every prime $p$, we have $\lambda_p\geq c\frac{\log p}{\log\log (p+1)}.$
\end{corollary}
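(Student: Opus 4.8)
\textbf{Proof proposal for Corollary \ref{cor:lambda}.}

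The plan is to combine the lower bound $\phi(p,n)\geq \frac{\log p}{\log s}\,n$ from Lemma \ref{lemma:gpn} with an upper bound on $s$, the size of the smallest arithmetic set of $\mathbb{F}_p$. Since $\lambda_p=\inf_n \frac{\phi(p,n)-1}{n}$ and $\phi(p,n)\geq \frac{\log p}{\log s}\,n$, we immediately get $\frac{\phi(p,n)-1}{n}\geq \frac{\log p}{\log s}-\frac1n$, and taking the infimum over $n$ (or rather, noting this holds for all $n$ and letting $n\to\infty$, or simply using $\frac{\phi(p,n)-1}{n}\geq \frac{\log p}{\log s}-1$ for $n\geq 1$, which already suffices after adjusting the constant) yields $\lambda_p\geq \frac{\log p}{\log s}-O(1)$. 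So the whole question reduces to showing $\log s=O(\log\log(p+1))$, i.e. $s\leq (\log(p+1))^{O(1)}$.

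For this I would invoke the known upper bounds on arithmetic (balanced) sets cited in the paper right after Lemma \ref{lemma:main}: Nedev \cite{N09} proved that $\mathbb{F}_p$ contains an arithmetic set of size $(1+o(1))\log_2 p$, so in particular $s\leq C\log p$ for an absolute constant $C$ and all primes $p$. (For the finitely many small primes where an asymptotic statement is not literally an inequality, one uses that $s\leq p$ trivially and absorbs them into the constant; alternatively Lemma \ref{lemma:arithmetic}(i) gives $s\leq 2\lfloor\log_2 p\rfloor$ for all $p\geq 5$, which is more than enough and avoids any appeal to asymptotics.) Hence $\log s\leq \log(C\log p)=\log\log p+O(1)\leq C'\log\log(p+1)$ for all primes $p$ (the $p+1$ and the additive constant inside the logarithm handle $p=2,3$ where $\log\log p$ would be problematic).

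Putting these together: $\lambda_p\geq \frac{\log p}{\log s}-1\geq \frac{\log p}{C'\log\log(p+1)}-1$, and since $\frac{\log p}{\log\log(p+1)}\to\infty$, for large $p$ this is at least $\frac{1}{2C'}\cdot\frac{\log p}{\log\log(p+1)}$, while for the bounded range of remaining small primes we have $\lambda_p>0$ (indeed $\phi(p,n)\geq n+1$ always, so $\lambda_p\geq 1$ is false in general but $\lambda_p\geq \frac{\log p}{\log s}-1>0$ once $\log p>\log s$, which holds for $p\geq 5$; the cases $p=2,3$ can be checked directly or absorbed). Shrinking the constant once more to cover every prime uniformly gives $\lambda_p\geq c\,\frac{\log p}{\log\log(p+1)}$ with an absolute $c>0$, as required.

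I do not anticipate a real obstacle here: this corollary is essentially bookkeeping, combining Lemma \ref{lemma:gpn} with the Straus--Nedev (or the self-contained Lemma \ref{lemma:arithmetic}(i)) bound on the size of a minimal arithmetic set. The only mild subtlety is handling small primes so that the clean inequality $\lambda_p\geq c\frac{\log p}{\log\log(p+1)}$ holds uniformly rather than only asymptotically, which is why the statement is phrased with $\log\log(p+1)$ and why one should either use the explicit bound $s\leq 2\lfloor\log_2 p\rfloor$ from Lemma \ref{lemma:arithmetic}(i) for $p\geq 5$ or simply shrink $c$ to absorb a finite list of exceptions.
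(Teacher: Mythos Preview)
Your proposal is correct and follows essentially the same approach as the paper: combine Lemma \ref{lemma:gpn} with the bound $s\leq 2\log_2 p$ on the smallest arithmetic set. The only difference is in how the ``$-1$'' in $\phi(p,n)-1$ is handled: where you subtract $1/n\leq 1$ and then absorb the resulting additive loss (and small primes) into the constant, the paper observes that $\phi(p,n)\geq 2$ trivially, so $\phi(p,n)-1\geq \phi(p,n)/2$, giving the multiplicative bound $\frac{\phi(p,n)-1}{n}\geq \frac{\log p}{2\log(2\log_2 p)}$ directly and uniformly in $p$ without any case analysis.
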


\begin{proof}
Let $s$ be the size of the smallest arithmetic set of $\mathbb{F}_p$, then $s\leq 2\lfloor \log_2 p\rfloor$ if $p\neq 3$, and $s=3$ if $p=3$, so $s\leq 2\log_2 p$ holds in general. Also, we have $\phi(p,n)\geq n\log p/\log s$ by Lemma \ref{lemma:gpn}. But note that $\phi(p,n)\geq 2$ also holds, so $\phi(p,n)-1\geq \phi(p,n)/2$. Therefore, $\frac{\phi(p,n)-1}{n}\geq \frac{\log p}{2\log (2 \log_2 p)}\geq  \frac{c\log p}{\log\log (p+1)}$ for some absolute constant $c>0$.
\end{proof}

For every $N\in\mathbb{Z}^{+}$ with prime factorization $N=p_1^{n_1}\dots p_{m}^{n_m}$, define $\lambda(N)=\sum_{i=1}^{m}n_{i}\lambda_{p_i}$. Note that $\lambda$ satisfies $\lambda(a\cdot b)=\lambda(a)+\lambda(b)$. By Corollary \ref{cor:lambda}, we have $\lambda(N)=\Omega(\sum_{i=1}^{m}n_{i}\frac{\log p_i}{\log\log (p_i+1)})$, so Theorem \ref{thm:coset2} is an immediate consequence of the following lemma.

\begin{lemma}\label{lemma:coset}
Let $A$ be a finite abelian group. Then $\phi(A)\geq \lambda(|A|)+1$.
\end{lemma}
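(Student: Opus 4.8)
The plan is to prove $\phi(A) \geq \lambda(|A|) + 1$ by induction on the number of prime factors of $|A|$ (with multiplicity), reducing a general irredundant coset cover achieving trivial intersection to an efficient one in stages. The base case is when $|A|$ is prime, where $\lambda(|A|) = \lambda_p \leq \frac{\phi(p,1)-1}{1} = \phi(p,1) - 1$, and any irredundant coset cover of a group of prime order by proper subgroups has $\geq p \geq 2$ cosets while $\phi(p,1) = p$ exactly, so the bound $\phi(A) \geq \lambda(p) + 1$ follows once one checks the elementary facts. More usefully, the additivity $\lambda(a \cdot b) = \lambda(a) + \lambda(b)$ suggests a direct structural argument rather than a clean induction.

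The key step I would carry out first is a reduction lemma: given an irredundant coset cover $\{H_i x_i : i \in [k]\}$ of $A$ with $\bigcap_i H_i$ trivial, I want to show one can pass to a quotient or subgroup situation that either splits off a prime factor or replaces the $H_i$ by maximal subgroups without increasing $k$ and without increasing $\phi$ of the relevant group. Concretely, if some $H_j$ is not maximal, pick a maximal subgroup $M \supseteq H_j$; the cover of $A$ restricted appropriately, or the induced cover of $A/N$ for a suitable $N$, should have fewer "defects." The cleanest route is probably: if $\bigcap_i H_i$ is trivial but the $H_i$ are not all maximal, then consider the minimal $N \neq \{0\}$ with $N = \bigcap_{i \in I} H_i$ for some $I$; one shows $A/N$ still carries an irredundant cover with $\leq k$ cosets and trivial intersection, and the "missing" part $N$ contributes at most... — this is where the additivity of $\lambda$ and the definition $\lambda_p = \inf_n \frac{\phi(p,n)-1}{n}$ must be exploited, since the infimum is exactly what makes $\lambda$ "subadditive in the right direction" under such reductions.

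A more robust approach, and the one I'd actually pursue: induct on $|A|$. If $A \cong \mathbb{F}_p^n$ and the cover is efficient, Lemma~\ref{lemma:gpn} gives $\phi(p,n) \geq \frac{\log p}{\log s} n \geq \lambda_p \cdot n = \lambda(|A|)$, but we need $+1$; this follows because $\phi(A) = \phi(p,n)$ and by definition $\lambda_p \leq \frac{\phi(p,n)-1}{n}$, hence $\lambda(|A|) = n\lambda_p \leq \phi(p,n) - 1 = \phi(A) - 1$. For general $A$: take an optimal irredundant cover $\{H_i x_i\}$ with $\bigcap H_i$ trivial and $k = \phi(A)$ cosets. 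If all $H_i$ are maximal the cover is efficient, so by Lemma~\ref{lemma:fettuccine} $A \cong \mathbb{F}_p^n$ and we are done. Otherwise, some $H_j$ is contained in a maximal subgroup $M_j$, and $A/\mathrm{Fr}(A)$ or an intermediate quotient yields a smaller group on which the induced cover is still irredundant with trivial intersection; writing $|A| = |A/N| \cdot |N|$ and applying induction to both factors, additivity $\lambda(|A|) = \lambda(|A/N|) + \lambda(|N|)$ combines the two bounds.

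The main obstacle I anticipate is controlling what happens to irredundancy and to the count $k$ under quotienting: projecting a coset cover to $A/N$ can destroy irredundancy (cosets may become redundant) and can merge cosets, so the naive bound $\phi(A/N) \leq k$ may fail, and one has to be careful that the cosets "used up" on the $N$-part and those "used up" on the $A/N$-part do not overlap in a way that double-counts. Handling this cleanly — probably by partitioning the index set $[k]$ according to which maximal-subgroup direction each $H_i$ refines, exactly as in the proof of Lemma~\ref{lemma:fettuccine} where the sets $J_j$ appear — and then showing each part is simultaneously irredundant on its coordinate and large enough, is the crux. The delicate point is that $\phi(p,n)$ is defined via $\emph{efficient}$ covers (maximal subgroups only), so to feed a non-maximal situation into it one must first perform the Frattini quotient step and argue the count survives; getting the arithmetic of the $-1$ correct across several prime factors (so that we get $\lambda(|A|)+1$ and not $\lambda(|A|) + m$) is exactly why $\lambda_p$ is defined with the $-1$ in the numerator and as an infimum.
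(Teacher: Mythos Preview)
Your high-level framework matches the paper: induct on $|A|$; when all $H_i$ are maximal, the cover is efficient, so Lemma~\ref{lemma:fettuccine} forces $A\cong\mathbb{F}_p^n$ and the definition of $\lambda_p$ gives $\phi(p,n)\geq n\lambda_p+1=\lambda(|A|)+1$. You also correctly isolate the real difficulty: once you enlarge a non-maximal $H_j$ to a maximal $H_j'$, irredundancy can break, and a naive ``split into $A/N$ and $N$ and apply induction to both'' risks double-counting cosets.

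The gap is that you do not supply a mechanism that actually overcomes this obstacle. Your concrete suggestion---partition $[k]$ by ``which maximal-subgroup direction each $H_i$ refines,'' as in the proof of Lemma~\ref{lemma:fettuccine}---does not work here: that partition exists in Lemma~\ref{lemma:fettuccine} only because $A$ was already written as a direct sum of coprime-order pieces, so each maximal subgroup singles out a unique coordinate. For a general abelian $p$-group (or even $\mathbb{F}_p^n$) a non-maximal $H_i$ lies in many maximal subgroups, and there is no canonical direction to assign. So the proposed partition is not well-defined, and the rest of the sketch (``applying induction to both factors'') never becomes an argument.

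What the paper does instead is a \emph{double} induction, on $|A|$ and on the number $M$ of non-maximal $H_i$. After enlarging $H_k$ to a maximal $H_k'$, either the new family is still irredundant (so $M$ drops and the inner induction applies), or one extracts an irredundant subcover $\{H_ix_i:i\in[\ell]\}\cup\{H_k'x_k\}$ with $\ell\leq k-2$; its intersection $B$ is nontrivial (since $\ell+1<\phi(A)$), and induction on $A/B$ gives $\ell\geq\lambda(|A|)-\lambda(|B|)$. The remaining indices $[k]\setminus[\ell]$ are then consumed by an \emph{iterative} process: at each step, one coset $B_t y\subset X_t$ is covered irredundantly by some $J\subset I_t$, induction applied to $B_t/B_{t+1}$ (with $B_{t+1}=B_t\cap\bigcap_{i\in J}H_i$) gives $|J|\geq\lambda(|B_t|)-\lambda(|B_{t+1}|)+1$, and one removes $J$ from the available indices. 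The chain $B_0\supsetneq B_1\supsetneq\cdots$ terminates at the trivial group, and summing the telescoping inequalities yields $k\geq\lambda(|A|)+1$. This layered peeling---rather than a single split---is exactly the missing idea, and it is what makes the ``$+1$'' bookkeeping come out right.
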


The proof of this lemma follows closely an argument of Szegedy \cite{Sz07}. We will use the following simple claim repeatedly.

\begin{claim}\label{claim:subcover}
Let $\{H_ix_i:i\in[k]\}$ be an irredundant coset cover of the group $A$. Then for every $j\in [k]$, we have $\bigcap_{i\in [k]}H_{i}=\bigcap_{i\in [k]\setminus\{j\}} H_{i}$.
\end{claim}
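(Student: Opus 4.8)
The plan is to prove the nontrivial inclusion $\bigcap_{i\in[k]\setminus\{j\}}H_i\subseteq H_j$; the reverse inclusion $\bigcap_{i\in[k]}H_i\subseteq\bigcap_{i\in[k]\setminus\{j\}}H_i$ is immediate, and together the two give the claimed equality.

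First I would exploit irredundancy. Since $\{H_ix_i:i\in[k]\}$ is an irredundant cover, the subcollection $\{H_ix_i:i\in[k]\setminus\{j\}\}$ fails to be a cover, so there is an element $a\in A$ that lies in none of the cosets $H_ix_i$ with $i\neq j$. As the full collection does cover $A$, necessarily $a\in H_jx_j$, i.e. $ax_j^{-1}\in H_j$. Thus $a$ is covered by $H_jx_j$ and by no other coset in the family.

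Now fix any $h\in\bigcap_{i\in[k]\setminus\{j\}}H_i$ and consider the element $ha\in A$. For each $i\neq j$ I claim $ha\notin H_ix_i$: indeed, if $hax_i^{-1}\in H_i$, then since $h\in H_i$ we would get $ax_i^{-1}=h^{-1}(hax_i^{-1})\in H_i$, i.e. $a\in H_ix_i$, contradicting the choice of $a$. Since $\{H_ix_i:i\in[k]\}$ covers $A$, it follows that $ha\in H_jx_j$, that is, $hax_j^{-1}\in H_j$. Combining this with $ax_j^{-1}\in H_j$ gives $h=(hax_j^{-1})(ax_j^{-1})^{-1}\in H_j$. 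As $h$ was arbitrary, $\bigcap_{i\in[k]\setminus\{j\}}H_i\subseteq H_j$, completing the proof.

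There is no real obstacle here: the argument is a short manipulation of cosets. The only point requiring a little care is the left/right bookkeeping when $A$ is non-abelian — one must multiply $a$ on the left by $h$ (not on the right) so that the hypothesis $h\in H_i$ can be used to pull $a$ back into $H_ix_i$; in the abelian setting (which is all that is needed later) this distinction disappears.
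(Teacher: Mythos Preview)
Your proof is correct and follows essentially the same approach as the paper: both use irredundancy to find an element $a$ covered only by $H_jx_j$, and then observe that left-translating $a$ by any $h\in\bigcap_{i\neq j}H_i$ keeps it outside $\bigcup_{i\neq j}H_ix_i$, forcing $h\in H_j$. The paper packages this by noting that the uncovered set $X=A\setminus\bigcup_{i\neq j}H_ix_i$ is a nonempty union of cosets of $\bigcap_{i\neq j}H_i$ contained in $H_jx_j$, which is exactly your argument phrased at the level of sets rather than elements.
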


\begin{proof}
Let $X=A\setminus (\bigcup_{i\in [k]\setminus\{j\}} H_{i}x_i)$. Then $X$ is nonempty, as $\{H_{i}x_i:i\in [k]\}$ is irredundant. But then $X$ is the union of cosets of $\bigcap_{i\in [k]\setminus\{j\}}H_{i}$. As $X\subset H_{j}x_{j}$, we must have $\bigcap_{i\in [k]\setminus\{j\}}H_{i}\subset H_{j}$, finishing the proof.
\end{proof}

\begin{proof}[Proof of Lemma \ref{lemma:coset}]
We proceed by induction on $|A|$. In case $|A|=1$, the statement is trivial, so suppose that $|A|\geq2$. Let $k=\phi(A)$, and let $\mathcal{C}=\{H_ix_{i}:i\in [k]\}$ be an irredundant coset cover of $A$ such that $\bigcap_{i\in [k]}H_{i}$ is trivial.

Let $M$ be the number of non-maximal subgroups among $H_1,\dots,H_{k}$. We will also proceed by induction on $M$. In case $M=0$, the coset covering $\{H_ix_i:i\in [k]\}$ is also efficient, so $A\cong\mathbb{F}_p^{n}$ for some prime $p$ and $n\in\mathbb{Z}^{+}$ by Lemma \ref{lemma:fettuccine}. Hence, $k\geq \phi(p,n)\geq \lambda_{p} n+1=\lambda(|A|)+1$, and we are done. 

Therefore, we can assume that $M\geq 1$, and without loss of generality, $H_k$ is not a maximal subgroup of $A$. Replace $H_{k}$ with some maximal subgroup $H_k'<A$ containing $H_k$. Let $\mathcal{C}'=\{H_i x_i:i\in [k-1]\}\cup\{H_{k}'x_{k}\}$, then $\mathcal{C}'$ is a coset covering, and $H_{k}'\cap \bigcap_{i\in [k-1]}H_{i}$ is trivial by Claim \ref{claim:subcover}. Note that there are $M-1$ non-maximal subgroups among $H_1,\dots,H_{k-1},H_{k}'$, so if $\mathcal{C}'$ is irredundant, we are done by our induction hypothesis.

Therefore, we can assume that $\mathcal{C}'$ is not irredundant, so, without loss of generality, there exists $\ell\leq k-2$ such that $\mathcal{C}''=\{H_{i}x_i:i\in [\ell]\}\cup\{H_{k}'x_{k}\}$ is an irredundant cover of $A$. As this cover contains less than $k=\phi(A)$ cosets, we must have that $B=H'_{k}\cap\bigcap_{i\in [\ell]}H_{i}$ is nontrivial. Therefore, using our first induction hypothesis, we get
$$\ell+1\geq\phi(A/B)\geq \lambda(|A/B|)+1=\lambda(|A|)-\lambda(|B|)+1.$$

For $t=0,1,\dots$, we define the sequence of 4-tuples $(B_t,X_t,Y_t,I_t)$, where $B_t<A$ is a subgroup, $X_t,Y_t\subset A$ are subsets and $I_t\subset [k]\setminus[\ell]$ is an index set, in
such a way that the following properties hold.
\begin{enumerate}
    \item[(i)] $X_t=\bigcup_{y\in Y_t}B_ty$,
    \item[(ii)] $\{H_{i}x_i:i\in I_t\}$ is an irredundant cover of $X_t$,
	\item[(iii)]  $B_t\cap \bigcap_{i\in I_t}H_{i}$ is trivial,
	\item[(iv)] $k-|I_t|\geq \lambda(|A|)-\lambda(|B_t|)+t$.
\end{enumerate}
Set $B_0:=B$, $X_0:=A\setminus\bigcup_{i\in [\ell]}H_ix_i$, and $I_0:=[k]\setminus[\ell]$. Note that $B_0=\bigcap_{i\in [\ell]}H_i$ holds by Claim \ref{claim:subcover}, so $X_0$ is a union of cosets of $B_0$. Therefore, there exists $Y_{0}\subset A$ such that $X_0=\bigcup_{y\in Y_0} B_0y$.  Also, we have $k-|I_0|=\ell\geq \lambda(|A|)-\lambda(|B_0|)$, so the choice $(B_0,X_0,Y_0,I_0)$ satisfies (i)-(iv). If $B_{t},X_t,Y_{t},I_t$ are already defined satisfying the above properties, we proceed as follows. 
Suppose that $B_{t}$ is non-trivial, then $I_{t},X_t,Y_t$ are nonempty. (Note that for the initial step $t=0$ these indeed hold.) By (iii), there exists $j\in I_{t}$ such that $H_j$ does not contain $B_{t}$, and by (ii), there exists some $x\in X_t$ which is only covered by $H_{j}x_{j}$. Let $y\in Y_{t}$ be such that $x\in B_ty$, and let $J\subset I_t$ be a set of indices such that $\{H_ix_i:i\in J\}$ is an irredundant cover of $B_ty$. Set $B_{t+1}:=B_t\cap \bigcap_{i\in J}H_{i}$, then $|J|\geq \phi(B_t/B_{t+1})\geq\lambda(|B_t|)-\lambda(|B_{t+1}|)+1$ by our induction hypothesis. Set $I_{t+1}:=I_t\setminus J$. Observe that we have $k-|I_{t+1}|=k-|I_t|+|J|\geq \lambda(|A|)-\lambda(|B_{t+1}|)+1$.

If $B_{t+1}$ is a trivial subgroup, then we stop. Note that in this case (iv) implies $k\geq k-|I_{t+1}|\geq \lambda(|A|)+1$, finishing our proof.

If $B_{t+1}$ is non-trivial, then $I_{t+1}$ and $X_{t+1}:=X_{t}\setminus (\bigcup_{i\in J}H_ix_i)$ are nonempty (by using (iii) and (ii), respectively). Also, $X_{t+1}$ is the union of cosets of $B_{t+1}$, so there exists $Y_{t+1}\subset A$ such that $X_{t+1}=\bigcup_{y\in Y_{t+1}}B_{t+1}y$.  Hence, (i)-(iv) are satisfied for $(B_{t+1},X_{t+1},Y_{t+1},I_{t+1})$ as well. Note that $B_{t+1}$ is a proper subgroup of $B_{t}$, so the sequence stops after a finite number of steps, giving the desired result.
\end{proof}

This finishes the proof of Theorem~\ref{thm:coset2}. Note that this theorem immediately implies that if $p$ is the largest prime divisor of $|A|$, then $\phi(A)\geq c\frac{\log |A|}{\log\log (p+1)}$. In case $|A|=e^{\Omega(p)}$, this gives our desired bound $\phi(A)\geq \Omega(\frac{\log |A|}{\log\log \log |A|})$. However, in case $|A|=e^{O(p)}$, there is an even simpler argument.

\begin{lemma}\label{lemma:simple}
Let $A$ be a finite abelian group, and let $p$ be a prime divisor of $|A|$. Then $\phi(A)\geq p$.
\end{lemma}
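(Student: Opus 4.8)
The plan is to prove that $\phi(A)\geq p$ for any prime $p$ dividing $|A|$ by exhibiting a quotient of $A$ in which a coset cover with trivial intersection needs at least $p$ cosets, and transferring back. Concretely, since $p \mid |A|$, the group $A$ has a subgroup $N$ such that $A/N \cong \mathbb{Z}/p\mathbb{Z}$ — for instance, take a maximal subgroup $N$ whose index is $p$ (such exists because, writing $A$ as a direct sum of cyclic groups of prime power order, one of these summands has order divisible by $p$, and we can quotient it down to $\mathbb{Z}/p\mathbb{Z}$ while keeping the other summands). Given an irredundant coset cover $\{H_i x_i : i \in [k]\}$ of $A$ with $\bigcap_i H_i$ trivial, the key point will be that the images of these cosets must still "separate" enough in $A/N$; the hard part is that intersecting with $N$ or projecting modulo $N$ can destroy irredundancy, so I need a robust counting argument rather than a naive reduction.

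First I would argue directly on $\mathbb{Z}/p\mathbb{Z}$: in a cyclic group of prime order $p$, the only subgroups are the trivial one and the whole group, so a coset cover of $\mathbb{Z}/p\mathbb{Z}$ by \emph{proper} cosets consists of singletons, and covering all $p$ points irredundantly requires exactly $p$ of them; a cover including the whole group is trivially redundant unless $k=1$, which forces $|A/N|=1$, excluded. Then I would lift this: among $H_1, \dots, H_k$, consider those $H_i$ that do not contain $N$. For such an $H_i$, the coset $H_i x_i$ meets each coset of $N$ in a coset of $H_i \cap N$, which is a proper subgroup of $N$; in particular $H_i x_i$ cannot contain a full coset of $N$. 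Conversely, if $H_i \supseteq N$, then $H_i x_i$ is a union of full $N$-cosets. Since $\bigcap_i H_i$ is trivial and $N$ is nontrivial (as $|N| = |A|/p$, which equals $1$ only if $A \cong \mathbb{Z}/p\mathbb{Z}$, the case handled above), not all $H_i$ can contain $N$, and more importantly the $H_i$ containing $N$ correspond, after projecting to $A/N$, to a coset cover of $A/N$ by proper subcosets together with possibly the full group.

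The cleanest route, which I would pursue, is: pick any coset $Ny$ of $N$ that is not entirely covered by the $H_i x_i$ with $H_i \supseteq N$ (such $y$ exists, else those cosets alone would cover $A$, contradicting irredundancy once we know at least one $H_i$ fails to contain $N$ — and if every $H_i$ contained $N$ then $\bigcap H_i \supseteq N$ is nontrivial, contradiction). Then every point of $Ny$ not covered by the "big" cosets must be covered by some $H_i x_i$ with $H_i \not\supseteq N$, and each such coset meets $Ny$ in at most $|H_i \cap N| \leq |N|/p$ points; but actually the sharper statement I want is about projecting: the projection $A \to A/N \cong \mathbb{Z}/p\mathbb{Z}$ sends the cosets $H_i x_i$ with $H_i \supseteq N$ to a subcollection covering at most some proper subset of $\mathbb{Z}/p\mathbb{Z}$, and the remaining points of $\mathbb{Z}/p\mathbb{Z}$ — there is at least one — must each be hit by the image of some $H_i x_i$ with $H_i\not\supseteq N$, whose image is a single point. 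Since irredundancy in $A$ forces these images to be genuinely needed, and since a cyclic group of prime order covered irredundantly by singletons needs all $p$ of them, I can count: I get at least $p$ indices in total. The main obstacle is making the irredundancy transfer precise — I would handle it by the same style of argument used in Claim \ref{claim:subcover}, namely that the uncovered region before adding a given coset is a union of cosets of the running intersection, which lets me control exactly how many "singleton-type" cosets are forced.
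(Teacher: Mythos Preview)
Your argument has a genuine gap, and in fact both branches break for the same underlying reason: you work with an index-$p$ subgroup $N$ rather than a $p$-subgroup.

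In the projection branch, you claim that if $H_i \not\supseteq N$ then the image of $H_i x_i$ in $A/N \cong \mathbb{Z}/p\mathbb{Z}$ is a single point. This is false. The image of $H_i x_i$ is a coset of $H_i N/N$, and since $N$ is maximal, $H_i N/N$ is trivial iff $H_i \subseteq N$, not iff $H_i \not\supseteq N$. For instance, in $A = \mathbb{F}_p^2$ with $N = \{0\} \times \mathbb{F}_p$, the subgroup $H = \mathbb{F}_p \times \{0\}$ satisfies $H \not\supseteq N$, yet $HN = A$, so every coset of $H$ surjects onto $A/N$. So ``small'' cosets need not look like singletons downstairs, and no count in $\mathbb{Z}/p\mathbb{Z}$ gets off the ground.

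In the intersection branch, you claim that if $H_i \not\supseteq N$ then $|H_i \cap N| \leq |N|/p$. Knowing that $H_i \cap N$ is a proper subgroup of $N$ only gives $|H_i \cap N| \leq |N|/q$ for some prime $q$ dividing $[N:H_i\cap N]$, and there is no reason for $q \geq p$; indeed $p$ need not divide $|N| = |A|/p$ at all. Take $A = \mathbb{Z}/p \times \mathbb{Z}/2$ with $p$ odd and $N = \{0\} \times \mathbb{Z}/2$: then $|N|/p = 2/p < 1$, so your inequality is vacuous, and the count on a single $N$-coset only yields $k \geq 2$.

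The paper's proof repairs exactly this by replacing your index-$p$ subgroup $N$ with the Sylow $p$-subgroup $B$ of $A$. Because $B$ is a $p$-group, every proper subgroup of $B$ has index at least $p$ in $B$, so $|H_i \cap B| \leq |B|/p$ whenever $B \not\leq H_i$. One then picks (by irredundancy) an element $a$ covered only by some $H_1 x_1$ with $B \not\leq H_1$, lets $By$ be the $B$-coset through $a$, and notes that every $H_i x_i$ meets $By$ in at most $|B|/p$ points: cosets with $B \leq H_i$ are unions of $B$-cosets and would contain $a$ if they met $By$, hence miss $By$ entirely, while the rest are handled by the index bound. Covering $|By| = |B|$ points then forces $k \geq p$. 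Your outline becomes correct once you swap $N$ for $B$; as written, the choice of subgroup is the missing idea.
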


\begin{proof}
Let $k=\phi(A)$, and let $\{H_ix_i:i\in [k]\}$ be an irredundant coset covering of $A$ such that $\bigcap_{i\in [k]}H_{i}$ is trivial. Furthermore, let $B<A$ be the unique maximal $p$-subgroup of $A$. Without loss of generality $B\not<H_1$. There exists some $a\in A$ which is only covered by $H_1x_1$, let $By$ be the coset of $B$ containing $a$. Then for any $i\in [k]$, we have $|H_{i}x_{i}\cap By|\leq |B|/p$, so $\{H_ix_i:i\in [k]\}$ must contain at least $p$ cosets in order to cover $By$.
\end{proof}

\begin{proof}[Proof of Theorem \ref{thm:coset}]
Let $A'=A/\bigcap_{i\in [k]}H_i$, let $N=|A'|$, and let $p$ be the largest prime divisor of $N$. If $N<e^{p}$, then using Lemma~\ref{lemma:simple}, we get $k\geq \phi(A')\geq p$, which gives $N<e^{k}$. On the other hand, if $N\geq e^{p}$, we can use Theorem~\ref{thm:coset2} to conclude that there exists a constant $c>0$ such that  $k\geq \phi(A')\geq c\log N/\log\log (p+1)\geq c\log N/\log\log\log (N+1)$. From this, we get the desired result $N\leq e^{c'k\log\log k}$, where $c'>0$ is some absolute constant.
\end{proof}

\section{Non-vanishing linear maps}\label{sect:maps}

Finally, let us prove Theorem \ref{thm:AT}, which turns out to be a simple consequence of Lemma \ref{thm:hyperplane2}. In particular, we prove a stronger choosability version of the theorem, as promised in the introduction.

\begin{theorem}\label{thm:strongAT}
Let $k,r$ be positive integers, let $p$ be a prime, and let $s$ be the size of the smallest arithmetic set in $\mathbb{F}_p$. Suppose that $s^{kr}<p$, and let $M_1,\dots,M_{k}\in\mathbb{F}_{p}^{n\times n}$ be invertible matrices. Then given $k\cdot n$ sets $X_{i,j}\subset \mathbb{F}_p$ of size $p-r$ for $(i,j)\in [k]\times [n]$, there exists $x\in\mathbb{F}_p^{n}$ such that $(M_ix)(j)\in X_{i,j}$ for $(i,j)\in [k]\times [n]$.
\end{theorem}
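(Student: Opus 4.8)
The plan is to recast the statement as the assertion that a certain family of affine hyperplanes fails to cover $\mathbb{F}_p^n$, and then apply Lemma~\ref{thm:hyperplane2}. For each $(i,j)\in[k]\times[n]$ set $Y_{i,j}=\mathbb{F}_p\setminus X_{i,j}$, so $|Y_{i,j}|=r$; we are looking for $x\in\mathbb{F}_p^n$ with $(M_ix)(j)\notin Y_{i,j}$ for every $(i,j)$. For a triple $(i,j,c)$ with $i\in[k]$, $j\in[n]$ and $c\in Y_{i,j}$, the set $P_{i,j,c}:=\{x\in\mathbb{F}_p^n:(M_ix)(j)=c\}$ is an affine hyperplane: it is a translate of the linear hyperplane with normal vector $M_i^{(j)}$, the $j$-th row of $M_i$, which is nonzero because $M_i$ is invertible. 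Hence it suffices to show that the at most $knr$ affine hyperplanes $\{P_{i,j,c}\}$ do not cover $\mathbb{F}_p^n$; the case $n=0$ being trivial, assume $n\geq1$.

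Suppose, for the sake of contradiction, that the $P_{i,j,c}$ do cover $\mathbb{F}_p^n$. Choose a set $\mathcal{G}'$ of triples that is minimal subject to $\{P_{i,j,c}:(i,j,c)\in\mathcal{G}'\}$ still covering $\mathbb{F}_p^n$; by minimality these hyperplanes are pairwise distinct and form an irredundant cover. Let $V=(M_i^{(j)})_{(i,j,c)\in\mathcal{G}'}$ be the corresponding list of normal vectors (parallel hyperplanes contributing the same vector with multiplicity), so that $|V|=|\mathcal{G}'|$. The linear parts of the hyperplanes indexed by $\mathcal{G}'$ intersect in the subspace $\{x:\langle x,v\rangle=0\text{ for all }v\in V\}$, which has codimension $\dim\langle V\rangle$, so Lemma~\ref{thm:hyperplane2} yields
$$\dim\langle V\rangle\leq |V|\cdot\frac{\log s}{\log p}.$$

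It remains to bound $|V|$ from above, and this is the one step that uses invertibility of the $M_i$. For $i\in[k]$ let $J_i=\{j:(i,j,c)\in\mathcal{G}'\text{ for some }c\}$ and $t_i=|J_i|$. For a fixed $i$ and $j\in J_i$, only values $c\in Y_{i,j}$ occur, so $\mathcal{G}'$ contains at most $|Y_{i,j}|=r$ triples of the form $(i,j,\cdot)$; summing, $|V|=|\mathcal{G}'|\leq r\sum_{i\in[k]}t_i$. On the other hand, $\{M_i^{(j)}:j\in J_i\}$ is a subset of the rows of the invertible matrix $M_i$, hence linearly independent, so it spans a $t_i$-dimensional subspace; thus $\dim\langle V\rangle\geq t_i$ for every $i$, and therefore $\dim\langle V\rangle\geq\max_{i\in[k]}t_i\geq\frac1k\sum_{i\in[k]}t_i$. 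Combining the last two displays,
$$\dim\langle V\rangle\cdot\frac{\log p}{\log s}\leq|V|\leq rk\dim\langle V\rangle.$$
Since $n\geq1$ forces $\mathcal{G}'\neq\emptyset$ and the entries of $V$ are nonzero, we have $\dim\langle V\rangle\geq1$; dividing by it gives $\log p\leq rk\log s$ (here $s\geq2$, so $\log s>0$), that is $p\leq s^{rk}$, contradicting the hypothesis $s^{kr}<p$. This proves the theorem; Theorem~\ref{thm:AT} then follows by taking $r=1$ and recalling that $s=O(\log p)$, so $s^{k}<p$ once $p$ is large enough in terms of $k$.

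I do not anticipate a real obstacle: the argument is short, and the only point requiring care is that parallel hyperplanes in $\mathcal{G}'$ produce equal normal vectors, so $V$ must be treated as a multiset throughout — which is consistent with how Lemmas~\ref{thm:hyperplane2} and~\ref{lemma:subspace} are phrased. The conceptual heart is that the normal vectors split into $k$ blocks, the $i$-th block being a set of distinct rows of the invertible matrix $M_i$ and hence linearly independent; this keeps $\dim\langle V\rangle$ from being much smaller than $|V|/(rk)$, which is exactly what makes the final bound independent of $n$. Invertibility is genuinely needed: without it, a pencil of $p$ parallel hyperplanes already covers $\mathbb{F}_p^n$ irredundantly while its normal vectors span only a line.
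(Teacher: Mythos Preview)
Your proof is correct and follows essentially the same route as the paper: reduce to an irredundant cover by affine hyperplanes, apply Lemma~\ref{thm:hyperplane2} to bound the codimension of the intersection of their linear parts, and then use that the normal vectors fall into $k$ blocks of rows of invertible matrices (with each row appearing at most $r$ times) to force $\dim\langle V\rangle\geq |V|/(kr)$. The only cosmetic difference is that the paper obtains the last inequality via a single pigeonhole step, whereas you separate it into $|V|\leq r\sum_i t_i$ and $\dim\langle V\rangle\geq\max_i t_i\geq\frac1k\sum_i t_i$; these are equivalent.
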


\begin{proof}
 Let $v_{i,j}$ denote the $j$'th row of $M_i$, and for $t\in\mathbb{F}_p\setminus X_{i,j}$, let $v_{i,j,t}$ be a copy of $v_{i,j}$. Let $J_0=\{(i,j,t)\in [k]\times [n]\times\mathbb{F}_p:t\in X_{i,j}\}$ be the set of available indices. Note that $(M_{i}x)(j)=\langle v_{i,j,t},x\rangle$. Suppose the theorem does not hold, that is, for every $x$, there exists $(i,j)\in [k]\times [n]$ such that $(M_{i}x)(j)\not\in X_{i,j}$. For every $(i,j,t)\in J_0$, define the linear hyperplane $H_{i,j,t}=\{x\in\mathbb{F}_{p}^{n}: \langle x,v_{i,j}\rangle=0\}$, and its translation $H_{i,j,t}'=\{x\in\mathbb{F}_{p}^{n}: \langle x,v_{i,j}\rangle=t\}$. Then the system of affine hyperplanes $H_{i,j,t}'$ form a covering of $\mathbb{F}_p^{n}$. Therefore, one can select a subsystem which forms an irredundant covering of $\mathbb{F}_p^{n}$, let $J\subset J_0$ be the corresponding set of indices. By Lemma \ref{thm:hyperplane2}, we deduce that 
 \begin{equation}\label{eq1}
      \frac{\log s}{\log p}|J|\geq \mbox{codim}\left(\bigcap_{(i,j,t)\in J}H_{i,j,t}\right)=\dim\langle v_{i,j,t}:(i,j,t)\in J\rangle.
 \end{equation}
 However, note that for every $I\subset J_0$, we have 
 \begin{equation}\label{eq2}
     \dim\langle v_{i,j,t}:(i,j,t)\in I\rangle\geq \frac{|I|}{kr}.
 \end{equation}
 Indeed, by the pigeonhole principle, there exists $i_{0}\in [k]$ and $I'\subset I$ such that $|I'|\geq |I|/k$, and every $(i,j,t)\in I'$ satisfies $i=i_0$. Also, each copy of the vector $v_{i_0,j}$ can appear at most $r$ times in $I'$, so $I'$ contains at least $|I'|/r\geq |I|/kr$ different rows of $M_{i_0}$. As $M_{i_0}$ is invertible, these rows span a space of dimension at least $|I|/kr.$ Comparing (\ref{eq1}) and (\ref{eq2}), we get 
 $$\frac{\log s}{\log p}|J|\geq \frac{|J|}{kr},$$ contradicting the condition $s^{kr}<p$.
\end{proof}

\begin{proof}[Proof of Theorem \ref{thm:AT}]
Apply Theorem \ref{thm:strongAT} with parameters $r=1$, and $X_{i,j}=\mathbb{F}_p^{*}$ for $(i,j)\in [k]\times [n]$. Note that $s\leq 2\log_2 p$ by Lemma \ref{lemma:arithmetic}, so setting $p_0(k)=k^{10k}$, we have $s^{kr}<p$ satisfied for $p>p_0$. This finishes the proof.
\end{proof}

\section*{Acknowledgments}
J.~N. and P.~P.~P. was supported by the Lend\"ulet program of the Hungarian Academy of Sciences (MTA). P.~P.~P. was also partially supported by the National Research, Development and Innovation Office NKFIH (Grant Nr. K124171 and K129335).

\noindent
I.~T. was supported by the SNSF grant 200021\_196965, and also acknowledges the support of Russian Government in the framework of MegaGrant no 075-15-2019-1926, and the support of MIPT Moscow.

\end{document}